\newtheorem{corollary}{Corollary}
\newtheorem{definition}{Definition}
\newtheorem{lemma}{Lemma}
\newtheorem{proposition}{Proposition}
\newtheorem{remark}{Remark}
\newtheorem{theorem}{Theorem}
\numberwithin{equation}{section}
\begin{document}
	
	\title[Charged Hawking mass in the electrostatic space]{Sharp lower bound for the charged Hawking mass in the electrostatic space}
	
	

	\author{Benedito Leandro}

	\address{University of Bras\'ilia, department of mathematics, Bras\'ilia-DF, 70910-900, Brazil} 
	\curraddr{}
	\email{bleandrone@mat.unb.br} 
	\thanks{Benedito Leandro was partially supported by CNPq/Brazil Grant 303157/2022-4 and 403349/2021-4.}

	\author{Guilherme Sabo}
	\address{University of Bras\'ilia, department of mathematics, Bras\'ilia-DF, 70910-900, Brazil}
	\curraddr{}
	\email{guilhermepaes92@hotmail.com}
	\thanks{Guilherme Sabo was partially supported by CAPES/Brazil Grant 88887.702728/2022-00}

	\subjclass[2020]{83C22, 83C05.}
	
	\date{}
	
	\dedicatory{}
	
	\begin{abstract}
		We prove sharp lower bounds for the charged Hawking mass of stable surfaces
in electrostatic space-times in various contexts. An upper bound for the genus of stable surfaces in the electrostatic system is provided. We also study the positivity for the charged Hawking mass of a minimal surface with index one in the electrostatic space-times. A criterion for a CMC surface in the Reissner-Nordstrom deSitter space to be stable is presented.
	\end{abstract}
	
	\maketitle
	
	\section{Introduction}

	A static space-time is a product manifold $\mathrm{M}^4=\mathbb{R}\times M^3$ with metric $\mathrm{g}=-f^2dt^2+g$, for a smooth, positive function $f:M\to(0,\,+\infty)$ and Riemannian metric $g$. Here, $(M^3,\,g)$ is an oriented three-dimensional Riemannian manifold. 
	
	The Einstein-Maxwell equations with cosmological constant $\Lambda$ for $(\mathrm{M}^4,\,\mathrm{g})$ are expressed by the following system
	\begin{equation}
		\left\{\begin{array}{rcll}
			&&\mathrm{Ric}_{\mathrm{g}}-\frac{\mathrm{R}_{\mathrm{g}}}{2}\mathrm{g} + \Lambda\mathrm{g} = 2\left(F\circ F -\frac{1}{4}|F|^2_{\mathrm{g}}\mathrm{g}\right);\nonumber\\\\
			&&dF=0\quad\mbox{and}\quad \textnormal{div}_{\mathrm{g}}F=0,\nonumber
		\end{array}\right.
	\end{equation}
	where $F$ stands for the (Faraday) electromagnetic $(0,\,2)$-tensor and $(F\circ F)_{\alpha\beta} = \mathrm{g}^{\sigma\gamma}F_{\alpha\sigma}F_{\beta\gamma},$ the Greek letters range from $1$ to $4$. 
	Taking $F=fE^{\flat}\wedge dt,$ where $E^{\flat}$ is the dual $1$-form associated with the electric field $E$, and a static space-time we obtain the following system of equations (cf. \cite{chrusciel2017non,tiarlos}).

	\begin{definition}\label{def1}
		Let a Riemannian $3$-manifold $(M^3,\,g)$, $E \in \mathfrak{X}(M)$
		a tangent vector field, and $f\in C^{\infty}(M)$ such that $f > 0$. The Einstein-Maxwell equations with cosmological constant $\Lambda$ for the electrostatic space-time associated to $(M^3,\, g,\, f,\,E)$ are given by
		\begin{equation}\label{s1}
			\begin{array}{rcll}
				\nabla^2f&=&f(\textnormal{Ric}-\Lambda g+2E^\flat\otimes E^\flat-|E|^2g),\\\\
				\Delta f&=&(|E|^2-\Lambda)f,\quad0=\textnormal{div}(E)\quad\mbox{and}\quad 0\,=\,\textnormal{curl}(fE).
			\end{array}
		\end{equation} 
		We say $(M^3,\, g,\, f,\,E)$ is an electrostatic system if $(M^3,\,g)$ is an oriented Riemannian manifold, $f\in C^{\infty}(M)$ and it is not identically zero, $E \in \mathfrak{X}(M)$, and the equations \eqref{s1} are satisfied for some constant
		$\Lambda\in\mathbb{R}$. The system is complete if $(M^3,\,g)$ is complete.
	\end{definition}

	Here, $\textnormal{Ric}$, $\nabla^2$, $\textnormal{div}$ and $\Delta$ stand for the Ricci tensor, Hessian tensor, divergence, and Laplacian concerning the metric $g$, respectively.

	It is well-known that $\textnormal{curl}(fE)=0$ if and only if $d(fE^\flat)=0.$ The smooth function $f$ is called the lapse function (or static potential), the field $E$ is the electric field, and $M^3$ is the three-dimensional manifold. Moreover, $f > 0$ on $M$. If $M$ has a horizon boundary $\partial M$, we assume in addition that $f^{-1}(0)\subseteq\partial M$ (cf. \cite{cederbaum2016uniqueness, chrusciel2017non,tiarlos}). It is interesting to point out that $\textnormal{curl}(fE)=0$ is trivially satisfied when $fE=\nabla\psi$ for some potential smooth function $\psi$ defined on $M$.
	
	Note that taking the contraction of the first equation and combining it with the Laplacian of $f$ in \eqref{s1}, we obtain a useful equation that relates the scalar curvature $R$, the cosmological constant, and the electric field:
	\begin{equation}\label{rrr}
		R=2(|E|^2+\Lambda).
	\end{equation}

	We will explore the Reissner-Nordstrom deSitter (RNdS) space as an example of electrostatic space. The $3+1$-dimensional RNdS space-time is a $3$-parameter family (labeled with a mass $\mathfrak{M}$, a charge $Q$ and cosmological constant $\Lambda$) of static, electrically charged,  solutions to the Einstein equations. Consider $(M^3,\,g_{RNdS},\,f,\,E)$, where $$M^3=I\times\mathbb{S}^2$$
	for some $I\subset\mathbb{R}$ which depends on the roots of the static potential given by
	$$f(r) ^2 = 1-\frac{2\mathfrak{M}}{r}+\frac{Q^2}{r^2}-\frac{\Lambda r^2}{3}.$$ This polynomial equation has four distinct solutions. There is a negative root with no physical relevance. We denote by $r_c> r_{+}>r_{\--}$ the positive roots. 
	The metric and the electric field for RNdS space are given by
	$$g_{RNdS}=f(r)^{-2}dr^2+r^2g_{\mathbb{S}^2}\quad\mbox{and} \quad E=\frac{Q}{r^2}f(r)\partial r.$$
	In the above, $Q,\,\mathfrak{M}\in\mathbb{R}$, and $g_{\mathbb{S}^2}$ stand for the charge, mass, and the standard metric of the unit sphere $\mathbb{S}^2$, respectively. Here, $r$ represents the radial coordinate.
	The set $\{r = r_{\--} \}$ is the inner
	(Cauchy) black hole horizon, $\{r = r_+ \}$ is the outer (Killing) black hole
	horizon, and $\{r = r_c \}$ is the cosmological horizon. We can have solutions possessing some extreme conditions: the cold black hole $r_{\--} = r_+$, the Nariai black hole $r_+ = r_c$, and the ultracold black hole $r_{\--} = r_+ = r_c$, see more details in \cite{tiarlos}. These solutions for the electrostatic system (Definition \ref{def1}) can be seen as a static black hole model with a given Hawking temperature (the reason behind the term ``cold").

	By change of variable (cf. \cite{baltazar2023}), the RNdS metric can be rewritten as 
	\begin{eqnarray}\label{novacoord}
		\tilde{g} = ds^2 + u(s)^2g_{\mathbb{S}^2};\quad [0,\,a]\times\mathbb{S}^2, 
	\end{eqnarray}
	where $u:(0,\,a)\to(r_+,\,r_c)$ is a function that extends continuously to $[0,\,a]$ with $u(0)=r_+$, $u(a)=r_c$, and $ds=\dfrac{dr}{f(r)}$, where $f>0$ for any $r\in(r_+,\,r_c).$
	
	Remember the total electric charge contained within an orientable closed surface $\Sigma$ with unit normal $\nu$, i.e.,
	\begin{eqnarray}\label{chaaaarge}
		Q(\Sigma)=\frac{1}{4\pi}\int_{\Sigma}\langle E,\,\nu\rangle d\sigma, 
	\end{eqnarray}
	and the ADM mass of a Riemannian manifold $(M^3,\,g)$, which is given by
	\begin{eqnarray*}
		\mathfrak{M} = \mathfrak{M}(M,\,g) =\frac{1}{8\pi}\displaystyle\lim_{r\to+\infty}\int_{\mathbb{S}(r)}\displaystyle\sum_{i,\,j=1}^3(\partial_ig_{ij}-\partial_{j}g_{ii})\nu^j,
	\end{eqnarray*}
	where $\mathbb{S}(r)$ is the standard sphere. Here, $\nu^j$ represents one component of the normal vector field of $\mathbb{S}(r).$

	A relevant concept in the electrostatic theory is the photon sphere. A time-like embedded orientable surface $\Sigma$ in $(\mathbb{R}\times M^3,\,-f^2dt^2 + g)$ is called a photon sphere if it is totally umbilical and $f$ is constant on every connected component
	of $\Sigma$. In the three-dimensional Reissner–Nordstrom space (i.e., $\Lambda=0$) with mass parameter $\mathfrak{M} >0$, there is a photon sphere located at the radius
 \begin{eqnarray}\label{raiophoton}
     \frac{3\mathfrak{M}}{2}\left(1 + \sqrt{1-\frac{8Q^2}{9\mathfrak{M}^2}}\right),
 \end{eqnarray}
	whenever $ 9 \mathfrak{M}^2\geq 8 Q^2$ (e.g., subextremal Reissner–Nordstrom space), see \cite{cederbaum2016uniqueness,sophia}. A Reissner–Nordstrom space is called subextremal (extremal, superextremal) if $\mathfrak{M}^2 > Q^2$ (if $\mathfrak{M}^2 = Q^2$, $\mathfrak{M}^2 < Q^2$). The photon sphere appears naturally in our study and plays an important role in our main results.
	
	The Reissner-Nordstrom photon sphere models an embedded submanifold ruled by photons spiraling around the central black hole ``at a fixed distance”. The Reissner–Nordstrom photon spheres are related to the existence of relativistic images in the context of gravitational lensing \cite{cederbaum2016uniqueness,sophia}. Such a photon sphere is a stable constant mean curvature (CMC) surface (see Proposition \ref{CMCstable} below). In fact, any sphere $\mathbb{S}^2(r)$ of radius $r$ in the RNdS is stable if
	\begin{eqnarray*}
		\frac{3\mathfrak{M}}{2}\left(1 - \sqrt{1-\frac{8Q^2}{9\mathfrak{M}^2}}\right)\leq r\leq  \frac{3\mathfrak{M}}{2}\left(1 + \sqrt{1-\frac{8Q^2}{9\mathfrak{M}^2}}\right).
	\end{eqnarray*}

	The concept of stability of a surface is related to its Morse index. The Morse index of a closed surface $\Sigma$ is given by the number of negative eigenvalues of the Jacobi operator $J$, where
	$$J=-\Delta^\Sigma - Ric(\nu,\,\nu) - |A|^2.$$ A constant mean curvature (CMC) surface $\Sigma$ in a Riemannian manifold is stable if the Jacobi operator is non-negative, i.e., the Morse index is zero ($Index(\Sigma)=0$). Here, $\Delta^\Sigma$ and $A$ stand for the Laplacian and the second fundamental form of $\Sigma$. Moreover, $\nu$ represents the normal vector field of $\Sigma.$ In the proof of Theorem \ref{Teo1}, we do not impose any information about the mean curvature $H$ of a closed surface in the electrostatic space $(M^3,\,g,\,f,\,E)$. It can be either non-constant or constant, and it can change signs. In Theorem \ref{Teo1}, we will only impose that $J$ must be non-negative, that is, $Index(\Sigma)=0$. Moreover, the photon sphere will be an important example of this theorem. Stable CMC surfaces are a fundamental tool in studying general relativity. For example, the theory of stable minimal surfaces was essential in proving the positive mass theorem \cite{schoen79}. In fact, the subject of mass will be central to this paper.


	 In Newtonian gravity, we can define the mass of a region by integrating the ``mass density function''. However, defining a similar concept in general relativity is difficult due to the Equivalence Principle (in general relativity, there is no density for gravitation). The concept of mass is a big deal in general relativity. In 1982, Penrose \cite{penrose} listed major open problems in general relativity, and the first problem was to ``find a suitable quasi-local definition of energy-momentum". Mathematically, this problem is a little bit loose. So, several definitions of quasi-local mass have emerged since then in an attempt to establish a well-defined notion of mass in general relativity. For instance, Penrose (1982), Bartnik (1989), Dougan-Manson (1991), Brown-York (1993), Zhang (2006), Liu-Yau (2006), Wang-Yau (2009), Alaee-Khuri-Yau (2023). But what properties can we expect from a quasi-local mass if it is well-defined? To get a good definition of quasi-local mass, certain properties must be satisfied, such as 
	\begin{enumerate}
		\item The mass must be positive for a large class of surfaces.
		\item It should vanish for surfaces in flat space-time.
		\item It should converge to the ADM mass at flat infinity.
		\item It should converge to the Bondi mass at null infinity.
		\item It should satisfy Penrose inequality if the surface enclosed a horizon.
		\item  It should satisfy some stability property for the rigidity case, i.e., it is expected that if we have a sequence of a finite domain and if the defined quasi-local mass of such a sequence of domain converges to zero, this sequence of finite domain converges to a domain in Minkowski space or Euclidean space.
  
  \item Monotonicity property under inclusion, i.e.,  $\forall\,\Omega_1\Subset\Omega_2\subset M$, with $\partial\Omega_1$ outer-minimizing in $\Omega_2$ and $\partial\Omega_2$ outer-minimizing in $M$. Then, $\textnormal{mass}(\Omega_1)\leq \textnormal{mass}(\Omega_2)$.
	\end{enumerate}
	We will define the term ``outer-minimizing" below. See more details about these properties of a quasi-local mass in \cite{alaee2023,christodoulou1986,mondino2022} and in the references therein. As far as we know, there is no universal definition of quasi-local mass in general relativity.

	One of those notions of quasi-local mass, the Hawking mass, has gotten much attention due to its important role in proving the Riemannian Penrose inequality (cf. \cite{huisken2001}). The ``standard'' Hawking mass of a surface $\Sigma^2$ in a given Riemannian manifold $(M^3,\,g)$ is given by
	\begin{eqnarray*}
		\mathfrak{M}_{H}(\Sigma) = \sqrt{\frac{|\Sigma|}{16 \pi}}\left(1 - \frac{1}{16\pi}\int_{\Sigma}H^2d\sigma \right),
	\end{eqnarray*}
 where $|\Sigma|$ and $H$ stand for the area of the surface and its mean curvature with respect to the metric $g$, respectively.
	A straightforward computation shows that if $\Sigma$ is a minimal surface, its Hawking mass is positive.  On the one hand, the Hawking mass of any surface in $\mathbb{R}^3$ is less than or equal to zero, with equality if and only if the surface is a round sphere (cf. \cite{mondino2022} and the references therein). These examples contradict the first item of the nice properties that a quasi-local mass must satisfy. On the other hand, Christodoulou and Yau \cite{christodoulou1986} proved that the Hawking mass is non-negative for stable constant mean curvature spheres in $3$-manifolds with non-negative scalar curvature. This shows that even the concept of Hawking's mass being positive can be a bitter issue to deal with. This paper aims to provide sharp lower bounds for the charged Hawking mass of stable surfaces in electrostatic space-times in various contexts, guaranteeing that the charged Hawking mass satisfies Item (1) under certain conditions.

	In \cite[Proposition 3]{miao2005}, the author proved a lower bound for the Hawking mass of a connected stable CMC surface boundary $\Sigma$ of an asymptotically flat and static manifold such that the Gauss curvature $K_\Sigma$ and the constant mean curvature $H_0$ of $\Sigma$ satisfy the inequality $4K_\Sigma\geq H_{0}^2$. In fact, they proved that $$\mathfrak{M}_{H}(\Sigma)\geq \frac{1}{8}\sqrt{\frac{H_{0}^2|\Sigma|}{4\pi}}\mathfrak{M}.$$
	Therefore, the Hawking mass is non-negative, according to the positive mass theorem.

	From a quasi-local mass point of view, it is desirable to draw information on the quasi-local mass of a surface $\Sigma$ purely from knowledge of the geometric data $(g_{\Sigma},\,H)$, where $g_{\Sigma}$ is the intrinsic metric on $\Sigma$ and $H$ is the mean curvature. It is natural to ask if the Hawking mass of such a surface is positive when the intrinsic metric is not far from being round. The positivity of the standard Hawking mass for CMC surfaces on compact $3$-manifolds  with non-negative scalar curvature was studied by \cite{miao2020} without assuming stability. However, additional conditions were imposed on geometric data.

	The Hawking mass can also be helpful in providing Item (1) for other definitions of quasi-local mass. A consequence of the proof of the Riemannian Penrose inequality via inverse mean
	curvature flow by \cite{huisken2001} gives us 
	\begin{eqnarray}\label{intog}
		\mathfrak{M}(M,\,g)\geq\mathfrak{M}_{H}(\partial\Omega),
	\end{eqnarray}
	where $(M^3,\,g)$ is an asymptotically flat Riemannian manifold (possibly with horizon boundary) with non-negative scalar curvature and $\Omega\subset M$ is a bounded open set with smooth topological boundary $\partial\Omega.$  
	
	The Hawking mass is often used as a lower bound for the Bartnik mass (a more delicate concept of quasi-local mass). Computing the Bartnik mass of a subset $\Omega$ is not easy, so one way to study the Bartnik mass is to look at its upper and lower bounds (cf. \cite{miao2020,mondino2022,lin2016}). We define the Bartnik mass 
	$\mathfrak{M}_{B}(\Omega)$ of $\Omega$ as 
	\begin{eqnarray}\label{bartnik}
		\mathfrak{M}_{B}(\Omega) = \inf\{\mathfrak{M}(\widetilde{M},\,\widetilde{g}):\, (\widetilde{M},\,\widetilde{g})\in\mathcal{A} \},  
	\end{eqnarray}
	where $\mathcal{A}$ is the set of asymptotically flat manifolds (possibly with horizon boundary) with non-negative scalar
	curvature into which $\Omega$ isometrically embeds such that $\partial\Omega\subset\widetilde{M}$ is outer-minimizing. The boundary $\partial\Omega$ is said to be outer-minimizing if $Perim(\Omega) \leq Perim(\Omega')$ for any set $\Omega'\subset M$ of a finite perimeter (denoted by $Perim(\Omega')$) and finite volume such that $\Omega\subset\Omega'$. The Bartnik mass is an important quasi-local mass since it satisfies the monotonicity property - Item (7). Note that the positive mass theorem (or the Riemannian Penrose inequality, in case all elements in $\mathcal{A}$ have non-empty horizon boundary) immediately yields the non-negativity of the
	Bartnik mass (see more in \cite{mondino2022} and in the references therein). In fact, since every smooth extension $(\widetilde{M},\,\widetilde{g})\in\mathcal{A}$ induces the same mean curvature on $\partial\Omega\subset\widetilde{M}$, the
	inequality \eqref{intog} combined with \eqref{bartnik} implies
	
	\begin{eqnarray*}
		\mathfrak{M}_{B}(\Omega)\geq\mathfrak{M}_{H}(\partial\Omega).
	\end{eqnarray*}

 	In \cite[Theorem 1.6]{mondino2022}, the authors assumed that the Hawking mass satisfies a certain local non-positivity condition to obtain a global rigidity theorem for Riemannian manifolds with non-negative scalar curvature. In fact, if the manifold is in addition asymptotically locally simply connected, the space must be isometric to $\mathbb{R}^3$. Moreover, to obtain a lower bound on the Bartnik mass \cite[Theorem 1.7]{mondino2022} out of the expansion of the Hawking mass obtained in \cite[Equation 11]{mondino2022}, the authors proved that the optimally perturbed geodesics spheres are outer-minimizing.

  We can see how useful it can be to prove lower bounds for the Hawking mass since computing the Bartnik mass of a subset is, in general, non-trivial. On the other hand,  from the definition, we can see that it is conceivable to expect upper bounds for the Bartnik quasi-local mass by direct comparison with somewhat explicit competitors. However, the issue of finding explicit lower bounds is more delicate. 
  
  Inspired by the above discussion, we will focus on finding sharp lower bounds for the charged Hawking mass of surfaces possessing an index equal to zero or one in non-compact (or compact) manifolds, particularly stable minimal surfaces, stable CMC surfaces, and minimal surfaces of index one. Moreover, we will provide examples proving that such lower bounds are sharp.

	\begin{definition}\cite{baltazar2023}\label{hawkingmass}
		Let $(M^3,\,g)$ be a three-dimensional Riemannian manifold and $\Sigma^2$ a closed surface on $M^3$. The charged Hawking mass is defined by
		\begin{eqnarray*}
			\mathfrak{M}_{CH}(\Sigma) = \sqrt{\frac{|\Sigma|}{16 \pi}}\left(\frac{1}{2}\mathfrak{X}(\Sigma) - \frac{1}{16\pi}\int_{\Sigma}(H^2 + \frac{4}{3}\Lambda)d\sigma + \frac{4\pi}{|\Sigma|}Q(\Sigma)^2 \right),
		\end{eqnarray*}
		where $|\Sigma|$ stands for the area of $\Sigma$. Here, $\mathfrak{X}(\Sigma)=2(1-g(\Sigma))$ and $g(\Sigma)$ stands for the genus of $\Sigma.$
	\end{definition}

	Note that, even for minimal surfaces, the positivity for the charged Hawking mass is not trivially obtained. It is a straightforward computation to show that the charged Hawking mass of a sphere in the Reissner-Nordstrom deSitter space satisfies $\mathfrak{M}_{CH}(\mathbb{S}(r))=\mathfrak{M}$ (cf. \cite{baltazar2023}). The result we will provide does not impose $(M^3,\,g)$ to be compact, and we do not ask for any additional condition over the mean curvature or Gauss curvature of a surface $\Sigma$ in the electrostatic system. It is well-known that if a surface $\Sigma$ have $Index(\Sigma)=0$, then for any $\phi\in C^{\infty}(\Sigma)$ we have
	\begin{eqnarray}\label{stabcond}
		0\leq \int_{\Sigma}|\nabla_{\Sigma}\phi|^2d\sigma - \int_{\Sigma}( Ric(\nu,\,\nu) + |A|^2)\phi^2d\sigma.
	\end{eqnarray}
	Taking $\phi=1$ we get
	\begin{eqnarray}\label{1}
		\int_{\Sigma}(Ric(\nu,\,\nu)+|A|^2)d\sigma\leq0.
	\end{eqnarray}

	\begin{theorem}\label{Teo1}
		Let $(M^3,\,g,\,f,\,E)$ be a three-dimensional electrostatic system (compact or non-compact) and $\Sigma$ a closed surface on $M$ with $Index(\Sigma)=0$. Then, the genus of $\Sigma$ is at most $1$. Moreover, 
		\begin{eqnarray}\label{ine1}
			\mathfrak{M}_{CH}(\Sigma)  \geq \left(\frac{|\Sigma|}{4\pi}\right)^{1/2}\left[\frac{1}{16\pi}\int_{\Sigma}H^2d\sigma +\frac{4\pi}{|\Sigma|}Q(\Sigma)^2+\frac{\Lambda}{3}\left(\frac{|\Sigma|}{4\pi}\right)\right].
		\end{eqnarray}
		The equality holds if and only if $\Sigma$ is totally umbilical, $E$ is orthogonal to $\Sigma$, and equality holds in \eqref{1}. 
	\end{theorem}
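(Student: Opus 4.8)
The plan is to turn the index hypothesis into the pointwise/integral information encoded in \eqref{1} and then translate it through the Gauss equation together with the electrostatic trace identity \eqref{rrr}. Concretely, the Gauss equation for $\Sigma\subset(M^{3},g)$ reads $2K_{\Sigma}=R-2\,\textnormal{Ric}(\nu,\nu)+H^{2}-|A|^{2}$, where $K_{\Sigma}$ denotes the Gauss curvature of $\Sigma$, so that
\begin{equation*}
	\textnormal{Ric}(\nu,\nu)+|A|^{2}=\frac{1}{2}R+\frac{1}{2}H^{2}+\frac{1}{2}|A|^{2}-K_{\Sigma}.
\end{equation*}
I would integrate this identity over $\Sigma$, substitute it into \eqref{1}, and then invoke Gauss--Bonnet, $\int_{\Sigma}K_{\Sigma}\,d\sigma=2\pi\mathfrak{X}(\Sigma)=4\pi(1-g(\Sigma))$, together with $R=2(|E|^{2}+\Lambda)$ from \eqref{rrr}. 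This yields
\begin{equation*}
	\int_{\Sigma}|E|^{2}\,d\sigma+\Lambda|\Sigma|+\frac{1}{2}\int_{\Sigma}H^{2}\,d\sigma+\frac{1}{2}\int_{\Sigma}|A|^{2}\,d\sigma\le 4\pi\bigl(1-g(\Sigma)\bigr),
\end{equation*}
and, since the left-hand side is nonnegative while $g(\Sigma)$ is a nonnegative integer, the genus bound $g(\Sigma)\le 1$ is immediate.

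For the mass inequality I would then combine this estimate with two elementary facts. First, $|A|^{2}\ge\tfrac12 H^{2}$ pointwise (with equality exactly at umbilic points), so $\tfrac{1}{2}\int_{\Sigma}|A|^{2}\,d\sigma\ge\tfrac{1}{4}\int_{\Sigma}H^{2}\,d\sigma$. Second, decomposing $E$ along $\Sigma$ into its tangential part $E^{\top}$ and $\langle E,\nu\rangle\nu$ and applying Cauchy--Schwarz together with \eqref{chaaaarge},
\begin{equation*}
	16\pi^{2}Q(\Sigma)^{2}=\Bigl(\int_{\Sigma}\langle E,\nu\rangle\,d\sigma\Bigr)^{2}\le|\Sigma|\int_{\Sigma}\langle E,\nu\rangle^{2}\,d\sigma\le|\Sigma|\int_{\Sigma}|E|^{2}\,d\sigma,
\end{equation*}
equality forcing $E^{\top}\equiv 0$ (i.e.\ $E$ normal to $\Sigma$) and $\langle E,\nu\rangle$ constant. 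Inserting both bounds into the previous display gives
\begin{equation*}
	\frac{16\pi^{2}}{|\Sigma|}Q(\Sigma)^{2}+\Lambda|\Sigma|+\frac{3}{4}\int_{\Sigma}H^{2}\,d\sigma\le 4\pi\bigl(1-g(\Sigma)\bigr).
\end{equation*}

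The final step is purely algebraic: using $\tfrac{1}{2}\mathfrak{X}(\Sigma)=1-g(\Sigma)$ and replacing $1-g(\Sigma)$ in the expression for $\mathfrak{M}_{CH}(\Sigma)$ from Definition \ref{hawkingmass} by the left-hand side above, the $\Lambda$, $H^{2}$ and $Q(\Sigma)^{2}$ contributions recombine into exactly the right-hand side of \eqref{ine1}; the bookkeeping rests on $\frac{1}{4\pi}-\frac{1}{12\pi}=\frac{1}{6\pi}$, $\frac{3}{16\pi}-\frac{1}{16\pi}=\frac{1}{8\pi}$, and $\sqrt{|\Sigma|/16\pi}=\tfrac{1}{2}(|\Sigma|/4\pi)^{1/2}$. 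For the rigidity statement, equality in \eqref{ine1} is equivalent to saturating every inequality in the chain at once: equality in \eqref{1} (equivalently in \eqref{stabcond} with $\phi\equiv1$), $|A|^{2}\equiv\tfrac12 H^{2}$ so that $\Sigma$ is totally umbilical, and equality in the Cauchy--Schwarz step, which forces $E$ orthogonal to $\Sigma$; conversely these conditions make all intermediate inequalities equalities. I expect the delicate points to be the careful translation of \eqref{1} through the Gauss equation (fixing sign conventions for $A$ and $K_{\Sigma}$) and pinning down the equality case precisely — in particular checking that the constancy of $\langle E,\nu\rangle$ is either implied by the listed conditions through the field equations or should be recorded alongside them. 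Sharpness is then witnessed by the round spheres $\mathbb{S}^{2}(r)$ in the Reissner--Nordstrom deSitter space (in the stable range), which are totally umbilical with radial — hence normal — electric field and satisfy $\mathfrak{M}_{CH}(\mathbb{S}^{2}(r))=\mathfrak{M}$.
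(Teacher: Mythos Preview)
Your argument is correct, and it is in fact a genuinely more elementary route than the one the paper takes. Both arrive at the same pivotal inequality
\[
4\pi\bigl(1-g(\Sigma)\bigr)\ \ge\ \frac{1}{2}\int_{\Sigma}H^{2}\,d\sigma+\frac{1}{2}\int_{\Sigma}|A|^{2}\,d\sigma+\int_{\Sigma}\bigl(|E|^{2}+\Lambda\bigr)\,d\sigma,
\]
after which $|A|^{2}\ge\tfrac12H^{2}$, the Cauchy--Schwarz bound on $Q(\Sigma)$, and the definition of $\mathfrak{M}_{CH}$ finish the job exactly as you describe. The difference is in how this inequality is reached. You obtain it in one line by rewriting $\textnormal{Ric}(\nu,\nu)+|A|^{2}$ via the Gauss equation and inserting $R=2(|E|^{2}+\Lambda)$ from \eqref{rrr}; in particular your proof never touches the lapse function $f$ and uses only the scalar curvature constraint. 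The paper instead proves a general Proposition~\ref{prop2}: it feeds the full Hessian equation $\nabla^{2}f(\nu,\nu)=f[\textnormal{Ric}(\nu,\nu)+2\langle E,\nu\rangle^{2}-(|E|^{2}+\Lambda)]$ and $\Delta f=(|E|^{2}-\Lambda)f$ into the decomposition $\Delta f=\Delta^{\Sigma}f+\nabla^{2}f(\nu,\nu)+H\langle\nabla f,\nu\rangle$, integrates $\tfrac{\Delta f}{f}$ over $\Sigma$, and combines this with a second identity (their \eqref{vvv}) together with \eqref{stata}. All the $f$-dependent and $\langle E,\nu\rangle^{2}$ terms then cancel and one lands on the same displayed inequality. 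What your approach buys is clarity that the conclusion of Theorem~\ref{Teo1} depends only on \eqref{rrr} and the index condition \eqref{1}, not on the remaining electrostatic equations in \eqref{s1}; what the paper's approach buys is a single Proposition~\ref{prop2} with a free parameter $\mathcal{C}$ that simultaneously handles Theorems~\ref{Teo1}, \ref{cocoro1} and \ref{cocoro2} by plugging in $\mathcal{C}=0$, $8\pi$, and $8\pi\bigl(1+\mathrm{Int}[(1+g(\Sigma))/2]\bigr)$.

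Two small remarks. First, your claim that the left-hand side is ``nonnegative'' in the genus step tacitly assumes $\int_{\Sigma}(|E|^{2}+\Lambda)\,d\sigma\ge 0$; the paper makes the same implicit assumption in its Proposition~\ref{prop2}. Second, your observation about the equality case is on point: saturation of the Cauchy--Schwarz step requires both $E^{\top}\equiv 0$ \emph{and} $\langle E,\nu\rangle$ constant, while the theorem records only the former. The paper does not address this either; in the RNdS examples used to witness sharpness, $\langle E,\nu\rangle=Q/r^{2}$ is indeed constant on each slice, so the issue does not arise there.
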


	\begin{remark}
		In Proposition \ref{CMCstable} (below), we will prove that any CMC surface in the RNdS space must be a sphere. The equality in Theorem \ref{Teo1} holds for a sphere of radius \eqref{raiophoton} in the Reissner-Nordstrom deSitter space. Moreover, we can conclude that the charged Hawking mass is non-negative if $\Lambda\geq0$.
	\end{remark}
	
	We will apply the above theorem to a stable minimal surface to get rigidity (Theorem \ref{sharptheo}). Nonetheless, we will also provide the lower bound for the charged Hawking mass considering a stable CMC surface with non-zero mean curvature and a minimal surface of index one (cf. Theorem \ref{cocoro1} and Theorem \ref{cocoro2}).

	Some important results use the Hawking mass to obtain the rigidity of a given space \cite{baltazar2023,maximo2013}. In \cite{baltazar2023}, the authors proved the rigidity of $3$-manifolds satisfying Einstein's constraints, assuming the existence of a strictly stable minimal two-sphere that locally maximizes the charged Hawking mass, concluding that the ambient space must be locally a copy of RNdS, see also \cite{maximo2013} for the analogous result without charge. If we consider just stability, the rigidity for area-minimizing two-sphere holds if the two-sphere has a fixed area. In this case, the sphere is a global maximum of the Hawking mass \cite[Remark 4.4]{maximo2013}. We will assume that \cite[Theorem 2]{baltazar2023} holds for area-minimizing two-sphere with a fixed area (see the details in the proof of Theorem \ref{sharptheo}).

 In the following theorem, we will use the results of \cite{baltazar2023,maximo2013} to obtain a sharp lower bound for the charged Hawking mass of minimal surfaces.


	\begin{theorem}\label{sharptheo}
		Let $(M^3,\,g,\,f,\,E)$ be a three-dimensional electrostatic system with a non-null cosmological constant and $\Sigma\subset M$ an embedded
		stable minimal surface. Then, the genus of $\Sigma$ is at most $1$. Moreover, 
		\begin{eqnarray}\label{desiminimal}
			\mathfrak{M}_{CH}(\Sigma)  \geq \left(\frac{|\Sigma|}{4\pi}\right)^{1/2}\left[\frac{4\pi}{|\Sigma|}Q(\Sigma)^2+\frac{\Lambda}{3}\left(\frac{|\Sigma|}{4\pi}\right)\right].
		\end{eqnarray}
		In addition, if $\Sigma$ is a stable minimal two-sphere, equality holds if and only if $\Sigma$ is the horizon boundary of the ultracold black hole system.
	\end{theorem}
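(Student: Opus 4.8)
\emph{Plan of proof.} The first two assertions are immediate from Theorem~\ref{Teo1}. A stable minimal surface has $H\equiv 0$ and, by definition of stability, $\mathrm{Index}(\Sigma)=0$, so Theorem~\ref{Teo1} applies verbatim and gives $g(\Sigma)\le 1$ together with \eqref{ine1}. Setting $H\equiv 0$ in \eqref{ine1} kills the term $\tfrac{1}{16\pi}\int_\Sigma H^2\,d\sigma$ and leaves precisely \eqref{desiminimal}. So the whole content is the rigidity statement, and that is what I would concentrate on.

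Assume now that $\Sigma$ is a stable minimal two-sphere for which equality holds in \eqref{desiminimal}. Since $H\equiv 0$, equality in \eqref{desiminimal} is equality in \eqref{ine1}, so the equality clause of Theorem~\ref{Teo1} tells us that $\Sigma$ is totally umbilical, $E\perp\Sigma$ along $\Sigma$, and equality holds in \eqref{1}. A totally umbilical minimal surface is totally geodesic, so $A\equiv 0$, and then equality in \eqref{1} reads $\int_\Sigma \mathrm{Ric}(\nu,\nu)\,d\sigma=0$. Testing the stability inequality \eqref{stabcond} (with $|A|^2=0$) against $\phi\equiv 1$ shows that the constant function realizes the bottom of the spectrum of $-\Delta^\Sigma-\mathrm{Ric}(\nu,\nu)$, hence that eigenvalue is $0$ and $\mathrm{Ric}(\nu,\nu)\equiv 0$ on $\Sigma$. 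Feeding $H=0$, $A=0$, $\mathrm{Ric}(\nu,\nu)=0$ and $|E|^2=\langle E,\nu\rangle^2$ into the first line of \eqref{s1} evaluated on $(\nu,\nu)$, and into the splitting $\Delta f=\Delta^\Sigma f+\nabla^2 f(\nu,\nu)$, forces $\Delta^\Sigma f=0$; since $\Sigma$ is closed, $f$ is constant along $\Sigma$. In summary, $\Sigma$ is a totally geodesic two-sphere, normal to $E$, with $\mathrm{Ric}(\nu,\nu)\equiv 0$ and $f$ constant along it.

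To globalize I would first record that, by Proposition~\ref{CMCstable}, any CMC surface in RNdS is a rotationally invariant sphere, and that rewriting equality in \eqref{desiminimal} for a minimal genus-zero surface reduces it to the single relation $\tfrac{4\pi}{|\Sigma|}Q(\Sigma)^2+\tfrac{\Lambda}{4\pi}|\Sigma|=1$. Then I would invoke the rigidity theorem of \cite{baltazar2023} in the area-minimizing, fixed-area form recalled before the statement (cf.\ also \cite[Remark~4.4]{maximo2013}): under that hypothesis $\Sigma$ is a maximum of the charged Hawking mass in its class, so the foliation/rigidity mechanism of \cite{baltazar2023,maximo2013} yields a neighbourhood of $\Sigma$ isometric to a piece of Reissner--Nordstrom--deSitter in which $\Sigma$ is a sphere $\{r=r_0\}$. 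Its mean curvature equals $2f(r_0)/r_0$, so minimality forces $f(r_0)=0$, i.e.\ $\Sigma$ is a horizon; the extra condition $\mathrm{Ric}(\nu,\nu)\equiv 0$ along $\Sigma$ forces $r_0$ to be a multiple root of the static potential, hence an extremal horizon, and the fact that $\Sigma$ is a genuine finite-distance horizon boundary together with the normalization $\tfrac{4\pi}{|\Sigma|}Q(\Sigma)^2+\tfrac{\Lambda}{4\pi}|\Sigma|=1$ pins the configuration down to the triple coincidence $r_{\--}=r_+=r_c$; thus $\Sigma$ is the horizon boundary of the ultracold black hole system. Conversely, substituting $|\Sigma|=4\pi r_0^2$, $Q(\Sigma)^2=r_0^2/2$, $\Lambda=1/(2r_0^2)$ and $H=0$ into \eqref{desiminimal} (or using the identity $\mathfrak{M}_{CH}(\mathbb{S}(r))=\mathfrak{M}$ of \cite{baltazar2023}) verifies that equality does hold for that surface.

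\emph{Where the difficulty lies.} The deductions in the second paragraph --- totally geodesic, $\mathrm{Ric}(\nu,\nu)\equiv 0$, $f|_\Sigma$ constant --- are routine once Theorem~\ref{Teo1} and \eqref{stabcond} are available. The real work is in the third paragraph: arranging the hypotheses so that the rigidity theorem of \cite{baltazar2023} genuinely applies (this is why one must pass to the area-minimizing, fixed-area setting, as the paper does), and then correctly matching the resulting degenerate RNdS geometry with the \emph{ultracold} member of the family rather than with an arbitrary extremal one. Identifying the near-horizon/product limit and verifying that stability, the equality relation, and the minimality/horizon condition together force $r_{\--}=r_+=r_c$ is the crux, and is where all the care has to go.
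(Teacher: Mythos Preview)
Your reduction of the genus bound and the inequality \eqref{desiminimal} to Theorem~\ref{Teo1} is exactly what the paper does. Your second paragraph, upgrading the equality clause of Theorem~\ref{Teo1} to the pointwise statements $A\equiv 0$, $\mathrm{Ric}(\nu,\nu)\equiv 0$, $f|_\Sigma=\mathrm{const}$, is correct (the Rayleigh-quotient argument is fine) and is a pleasant addition; the paper does not pass through these pointwise facts at all.

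The substantive divergence is in how the ultracold case is singled out. The paper does \emph{not} argue via ``$\mathrm{Ric}(\nu,\nu)=0\Rightarrow$ multiple root''. Instead it writes the equality condition as the polynomial relation $\Lambda r^4-r^2+Q^2=0$ (equivalently your $\tfrac{4\pi}{|\Sigma|}Q(\Sigma)^2+\tfrac{\Lambda}{4\pi}|\Sigma|=1$), substitutes back to express the charged Hawking mass along the equality family as $\mathfrak{M}_{CH}(\Sigma)=r-\tfrac{2}{3}\Lambda r^3$, and then observes that this function of $r$ is maximized precisely at $r^2=\tfrac{1}{2\Lambda}$. It is this maximization that feeds the hypothesis of \cite[Theorem~2]{baltazar2023} (local maximization of $\mathfrak{M}_{CH}$), and $r^2=\tfrac{1}{2\Lambda}$ together with $\Lambda r^4-r^2+Q^2=0$ forces $Q^2=\tfrac{1}{4\Lambda}$, $\mathfrak{M}^2=\tfrac{2}{9\Lambda}$, i.e.\ the ultracold parameters.

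Your proposed route has a genuine gap at the point you yourself flag. The conditions you extract---totally geodesic, $E\perp\Sigma$, $\mathrm{Ric}(\nu,\nu)\equiv 0$, and the normalization $\tfrac{4\pi}{|\Sigma|}Q^2+\tfrac{\Lambda}{4\pi}|\Sigma|=1$---are \emph{all} satisfied by the horizon $\{s=0\}$ of the cold black hole system and of the charged Nariai system as well. For the cylinder metric $g=ds^2+\rho^2 g_{\mathbb{S}^2}$ the scalar-curvature identity \eqref{rrr} already forces $Q^2=\rho^2-\Lambda\rho^4$, which is exactly your normalization; and $\mathrm{Ric}(\partial_s,\partial_s)=0$ holds for any product. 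So ``multiple root $+$ normalization $+$ finite-distance horizon'' does not pin down the triple coincidence; you get a one-parameter family of equality cases indexed by $\rho$, with cold for $\rho^2<\tfrac{1}{2\Lambda}$, Nariai for $\rho^2>\tfrac{1}{2\Lambda}$, and ultracold at $\rho^2=\tfrac{1}{2\Lambda}$. The missing ingredient---and the actual mechanism in the paper---is that among this family only $\rho^2=\tfrac{1}{2\Lambda}$ maximizes $\mathfrak{M}_{CH}(\Sigma)=\rho-\tfrac{2}{3}\Lambda\rho^3$, and this maximization is precisely what the rigidity input from \cite{baltazar2023,maximo2013} requires.
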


	

	Ahead, we want to obtain a lower bound for the charged Hawking mass for a stable CMC surface ($H\neq0$), i.e., a constant mean curvature surface $\Sigma$ satisfying the stability condition \eqref{stabcond} for any function $\phi\in C^{\infty}(\Sigma)$ such that $\int_{\Sigma}\phi d\sigma=0$. However, we can not provide the rigidity since, as far as we know, we do not possess a result like the main theorem in \cite{baltazar2023} for stable CMC surfaces. It is well-known that if a surface $\Sigma$ is a stable CMC surface, then
	\begin{eqnarray}\label{22}
		\int_{\Sigma}(Ric(\nu,\,\nu)+|A|^2)d\sigma\leq8\pi,
	\end{eqnarray}
	see the details in \cite{christodoulou1986,li82}.
	
	\begin{theorem}\label{cocoro1}
		Let $(M^3,\,g,\,f,\,E)$ be a three-dimensional electrostatic system with $\Sigma$ a closed stable CMC surface in $M$. Then, the genus of $\Sigma$ is at most $3$. Moreover,
		\begin{eqnarray}\label{ine3}
			\mathfrak{M}_{CH}(\Sigma)  \geq \left(\frac{|\Sigma|}{4\pi}\right)^{1/2}\left[\frac{H^2}{16\pi}|\Sigma|+\frac{4\pi}{|\Sigma|}Q(\Sigma)^2+\frac{\Lambda}{3}\left(\frac{|\Sigma|}{4\pi}\right) - 1\right].
		\end{eqnarray}
		The equality holds if and only if $\Sigma$ is totally umbilical, $E$ is orthogonal to $\Sigma$, and equality holds in \eqref{22}.
	\end{theorem}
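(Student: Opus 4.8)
The plan is to distill the statement into one sharp integral inequality on $\Sigma$ and then read off both the genus bound and the mass estimate from it by elementary algebra. I would start from the Gauss equation for the surface $\Sigma^{2}\subset M^{3}$: writing $K_{\Sigma}$ for the intrinsic Gauss curvature, $H$ for the (constant) mean curvature and $A$ for the second fundamental form,
\begin{equation*}
2K_{\Sigma}=R-2\,Ric(\nu,\nu)+H^{2}-|A|^{2},
\qquad\text{so}\qquad
Ric(\nu,\nu)+|A|^{2}=\tfrac{R}{2}+\tfrac{H^{2}}{2}+\tfrac{|A|^{2}}{2}-K_{\Sigma}.
\end{equation*}
Integrating over $\Sigma$, substituting the scalar curvature identity $R=2(|E|^{2}+\Lambda)$ from \eqref{rrr}, using Gauss--Bonnet $\int_{\Sigma}K_{\Sigma}\,d\sigma=2\pi\mathfrak{X}(\Sigma)=4\pi(1-g(\Sigma))$, and bounding $|A|^{2}\ge\tfrac12H^{2}$ pointwise (equality exactly at umbilic points), one obtains
\begin{equation*}
\int_{\Sigma}\bigl(Ric(\nu,\nu)+|A|^{2}\bigr)\,d\sigma\ \ge\ \int_{\Sigma}|E|^{2}\,d\sigma+\Lambda|\Sigma|+\tfrac34H^{2}|\Sigma|-4\pi+4\pi g(\Sigma).
\end{equation*}

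Next I would invoke the stability inequality \eqref{22} for closed stable CMC surfaces, $\int_{\Sigma}(Ric(\nu,\nu)+|A|^{2})\,d\sigma\le 8\pi$, together with the Cauchy--Schwarz estimate for the charge, $(4\pi Q(\Sigma))^{2}=\bigl(\int_{\Sigma}\langle E,\nu\rangle\,d\sigma\bigr)^{2}\le|\Sigma|\int_{\Sigma}\langle E,\nu\rangle^{2}\,d\sigma\le|\Sigma|\int_{\Sigma}|E|^{2}\,d\sigma$, i.e. $\int_{\Sigma}|E|^{2}\,d\sigma\ge 16\pi^{2}Q(\Sigma)^{2}/|\Sigma|$. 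Combining the three inequalities yields the master estimate
\begin{equation*}
12\pi-4\pi g(\Sigma)\ \ge\ \frac{16\pi^{2}Q(\Sigma)^{2}}{|\Sigma|}+\Lambda|\Sigma|+\tfrac34H^{2}|\Sigma|. \tag{$\star$}
\end{equation*}
Discarding the non-negative terms on the right-hand side of $(\star)$ gives $g(\Sigma)\le 3$. For the mass bound I would rewrite $12\pi-4\pi g(\Sigma)=8\pi+2\pi\mathfrak{X}(\Sigma)$, solve $(\star)$ for $\tfrac12\mathfrak{X}(\Sigma)$, and substitute into the defining expression of $\mathfrak{M}_{CH}(\Sigma)$ in Definition~\ref{hawkingmass}; after collecting the $H^{2}|\Sigma|$, $Q(\Sigma)^{2}$ and $\Lambda|\Sigma|$ terms and dividing by the positive factor $(|\Sigma|/16\pi)^{1/2}$, one recovers exactly \eqref{ine3}.

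For the rigidity claim I would track the three inequalities used: equality in \eqref{ine3} forces $|A|^{2}=\tfrac12H^{2}$ everywhere, i.e. $\Sigma$ totally umbilical; it forces equality in the Cauchy--Schwarz step, i.e. $E$ normal to $\Sigma$ (with constant normal component along $\Sigma$); and it forces equality in \eqref{22}. Conversely these conditions give equality, and by Proposition~\ref{CMCstable} they are met by round spheres of appropriate radius in the Reissner--Nordstrom deSitter space, so the bound is sharp. I do not expect any single estimate to be the real obstacle --- each is classical --- the delicate part is the bookkeeping: pinning down the Gauss-equation sign conventions, the constant in $\mathfrak{X}(\Sigma)=2(1-g)$, and the exact linear combination of $12\pi$, $4\pi g(\Sigma)$, $H^{2}|\Sigma|$, $Q(\Sigma)^{2}$, $\Lambda|\Sigma|$ in $(\star)$, so that the stated coefficients in \eqref{ine3} come out. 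The one genuinely substantive ingredient is the stability inequality \eqref{22}: it is what makes the nonminimal CMC case work, and the additional $-1$ in \eqref{ine3} together with the weaker genus bound (as against Theorem~\ref{Teo1}) are precisely the price of having $8\pi$ on the right of \eqref{22} instead of $0$ in \eqref{1}.
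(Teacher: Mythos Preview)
Your argument is correct and in fact more economical than the paper's. The paper proves Theorem~\ref{cocoro1} by invoking Proposition~\ref{prop2} with $\mathcal{C}=8\pi$; that proposition routes through the lapse function $f$, using the Hessian equation $\nabla^{2}f(\nu,\nu)=f[Ric(\nu,\nu)+2\langle E,\nu\rangle^{2}-(|E|^{2}+\Lambda)]$ and the decomposition $\Delta f=\Delta^{\Sigma}f+\nabla^{2}f(\nu,\nu)+H\langle\nabla f,\nu\rangle$ to produce two integral identities (\eqref{show} and \eqref{como1}) involving $\int_{\Sigma}\frac{H}{f}\langle\nabla f,\nu\rangle$ and $\int_{\Sigma}\frac{|\nabla_{\Sigma}f|^{2}}{f^{2}}$. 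When these are combined, all $f$-terms cancel and one lands on exactly your inequality $(\star)$. You bypass this detour entirely: since $Ric(\nu,\nu)+|A|^{2}=\tfrac{R}{2}+\tfrac{H^{2}}{2}+\tfrac{|A|^{2}}{2}-K_{\Sigma}$ is a pointwise consequence of the Gauss equation alone, and the only piece of the electrostatic structure you need is the scalar constraint $R=2(|E|^{2}+\Lambda)$, the lapse equations play no role in the inequality. What your route buys is a shorter, more transparent proof that makes clear the result depends only on the scalar-curvature identity~\eqref{rrr}, not on the full system~\eqref{s1}; what the paper's route buys is a unified Proposition~\ref{prop2} that also feeds Theorems~\ref{Teo1}, \ref{sharptheo} and~\ref{cocoro2}.

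One small caveat, shared with the paper: when you ``discard the non-negative terms on the right-hand side of $(\star)$'' to conclude $g(\Sigma)\le 3$, the term $\Lambda|\Sigma|$ is only non-negative if $\Lambda\ge 0$. The paper's own proof inserts ``Assuming $\Lambda\ge 0$'' at precisely this step, even though the theorem statement omits that hypothesis; you should flag the same assumption. The mass inequality~\eqref{ine3} itself is unaffected, since $(\star)$ holds for all $\Lambda$.
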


	\begin{remark}
		Considering $\Lambda>0$, the charged Hawking mass is non-negative for any stable CMC surface in the RNdS space such that $|\Sigma|\geq \dfrac{12\pi}{\Lambda}$. The Nariai system is $M^3=[0,\,\pi/\sqrt{\Lambda}]\times\mathbb{S}^2$ with metric tensor $g=ds^2 + \dfrac{1}{\Lambda}g_{\mathbb{S}^2}$, where $f(s)=\sin(\sqrt{\Lambda}s)$, $\mathfrak{M}=\dfrac{1}{3\sqrt{\Lambda}}$ and $Q=0$. Therefore, the area of a two-sphere $\Sigma$ in the Nariai system is $|\Sigma| = \dfrac{4\pi}{\Lambda}$. The equality holds in Theorem \ref{cocoro1} for stable CMC sphere such that $s=\dfrac{\pi}{2\sqrt{\Lambda}}.$ This will be discussed in more detail in the proof of Theorem \ref{cocoro1}.

	\end{remark}

	The last theorem above (Theorem \ref{cocoro1}) inspired us to pursue a similar result for minimal surfaces of index one. It is well-known that such a surface must satisfy the following inequality:
	\begin{eqnarray}\label{223}
		\int_{\Sigma}(Ric(\nu,\,\nu)+|A|^2)d\sigma\leq8\pi\left(1+ Int\left[\frac{1+g(\Sigma)}{2}\right]\right).
	\end{eqnarray}
	Here, $Int[x]$ denotes the integer part of $x$. See details in \cite{tiarlos,ritore92}.
	
	\begin{theorem}\label{cocoro2}
		Let $(M^3,\,g,\,f,\,E)$ be a three-dimensional electrostatic system with $\Sigma$ a closed minimal surface of index one in $M$. Then,
		\begin{eqnarray}\label{ine4}
			\mathfrak{M}_{CH}(\Sigma)  \geq \left(\frac{|\Sigma|}{4\pi}\right)^{1/2}\left(\frac{4\pi}{|\Sigma|}Q(\Sigma)^2+\frac{\Lambda}{3}\left(\frac{|\Sigma|}{4\pi}\right) - 1-Int\left[\frac{1+g(\Sigma)}{2}\right]\right).
		\end{eqnarray}
		The equality holds if and only if $\Sigma$ is totally umbilical, $E$ is orthogonal to $\Sigma$, and equality holds in \eqref{223}.
	\end{theorem}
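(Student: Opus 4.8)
The plan is to run the same argument as in the proofs of Theorem \ref{Teo1} and Theorem \ref{cocoro1}, now feeding in the index-one inequality \eqref{223} in place of \eqref{22}. Since $\Sigma$ is minimal, $H\equiv0$, so Definition \ref{hawkingmass} gives
\[
\mathfrak{M}_{CH}(\Sigma)=\sqrt{\frac{|\Sigma|}{16\pi}}\left(\frac12\mathfrak{X}(\Sigma)-\frac{\Lambda|\Sigma|}{12\pi}+\frac{4\pi}{|\Sigma|}Q(\Sigma)^2\right),
\]
and the whole task reduces to producing a lower bound for $\mathfrak{X}(\Sigma)$ in terms of $|\Sigma|$, $Q(\Sigma)$, $\Lambda$ and the genus.

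First I would combine the Gauss equation $2K_\Sigma=R-2Ric(\nu,\nu)+H^2-|A|^2$ with the scalar-curvature identity \eqref{rrr}. For $H=0$ this yields $Ric(\nu,\nu)+|A|^2=|E|^2+\Lambda-K_\Sigma+\tfrac12|A|^2$ pointwise on $\Sigma$. Integrating, using Gauss--Bonnet $\int_\Sigma K_\Sigma\,d\sigma=2\pi\mathfrak{X}(\Sigma)$, discarding the nonnegative term $\tfrac12\int_\Sigma|A|^2\,d\sigma$ from the left-hand side, and applying \eqref{223}, I get
\[
\int_\Sigma|E|^2\,d\sigma+\Lambda|\Sigma|-2\pi\mathfrak{X}(\Sigma)\le 8\pi\left(1+Int\left[\frac{1+g(\Sigma)}{2}\right]\right).
\]
Next I would bound the electric energy from below: splitting $E$ into its tangential and normal parts and using Cauchy--Schwarz together with \eqref{chaaaarge},
\[
\int_\Sigma|E|^2\,d\sigma\ge\int_\Sigma\langle E,\nu\rangle^2\,d\sigma\ge\frac{1}{|\Sigma|}\left(\int_\Sigma\langle E,\nu\rangle\,d\sigma\right)^2=\frac{16\pi^2}{|\Sigma|}Q(\Sigma)^2.
\]
Combining the two displays and solving for $\mathfrak{X}(\Sigma)$ produces
\[
\tfrac12\mathfrak{X}(\Sigma)\ge\frac{4\pi}{|\Sigma|}Q(\Sigma)^2+\frac{\Lambda|\Sigma|}{4\pi}-2-2\,Int\left[\frac{1+g(\Sigma)}{2}\right],
\]
and substituting this into the formula for $\mathfrak{M}_{CH}(\Sigma)$ above and simplifying (using $\tfrac{\Lambda|\Sigma|}{4\pi}-\tfrac{\Lambda|\Sigma|}{12\pi}=\tfrac{\Lambda|\Sigma|}{6\pi}$, $\sqrt{|\Sigma|/16\pi}=\tfrac12\sqrt{|\Sigma|/4\pi}$ and $\tfrac{\Lambda|\Sigma|}{12\pi}=\tfrac\Lambda3\cdot\tfrac{|\Sigma|}{4\pi}$) gives exactly \eqref{ine4}.

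For the equality discussion I would trace back the inequalities used. Equality forces: (i) $|A|^2\equiv0$ on $\Sigma$, which for a minimal surface is the same as $\Sigma$ being totally umbilical (hence totally geodesic); (ii) the tangential part of $E$ vanishes, i.e.\ $E$ is orthogonal to $\Sigma$; (iii) $\langle E,\nu\rangle$ is constant along $\Sigma$; and (iv) equality in \eqref{223}. Conversely, under (i)--(iv) each step is an equality, so \eqref{ine4} is saturated. The delicate point — exactly as in the equality analysis of Theorem \ref{cocoro1} — is verifying that (iii) is not an independent hypothesis: once $\Sigma$ is totally geodesic and $E\perp\Sigma$, the constancy of $\langle E,\nu\rangle$ should be forced by $\textnormal{div}(E)=0$ and $\textnormal{curl}(fE)=0$ restricted to $\Sigma$, which lets one phrase the equality case in terms of umbilicity, $E\perp\Sigma$ and equality in \eqref{223} only. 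I expect this equality bookkeeping, rather than the inequality itself (which is essentially a repetition of the earlier arguments with a different index constant), to be the only part requiring real care.
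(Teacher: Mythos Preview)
Your proof of the inequality is correct. The paper obtains \eqref{ine4} by quoting Proposition~\ref{prop2} with $\mathcal{C}=8\pi\bigl(1+Int[(1+g(\Sigma))/2]\bigr)$; the proof of that proposition routes through the static potential $f$ via the Laplacian decomposition \eqref{decopLapla} and the identity \eqref{vvv}, but the $f$-dependent terms cancel and one lands precisely on your inequality $2\pi\mathfrak{X}(\Sigma)\ge\tfrac12\int_\Sigma|A|^2+\int_\Sigma|E|^2+\Lambda|\Sigma|-\mathcal{C}$. You reach the same line directly from the Gauss equation and $R=2(|E|^2+\Lambda)$, never invoking $f$ beyond \eqref{rrr}. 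So your argument is a streamlined version of the paper's: it makes transparent that the electrostatic structure enters this estimate only through the scalar-curvature identity, not through the Hessian equation for the lapse. Your equality bookkeeping matches the paper's stated conditions, and you are right that the constancy of $\langle E,\nu\rangle$ (from equality in H\"older) is an additional ingredient the paper leaves implicit; the paper's equality clauses in Theorems~\ref{Teo1}--\ref{cocoro2} all carry the same unspoken caveat.
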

	
	\begin{remark}
		
		Considering $\Lambda>0$, the charged Hawking mass is non-negative for any minimal surface of index one in the RNdS space such that $|\Sigma|\geq \dfrac{12\pi}{\Lambda}\left( 1+Int\left[\frac{1+g(\Sigma)}{2}\right]\right)$.
		Consider the deSitter system which is $M^3=\mathbb{S}^3_{+}$ with metric $g=\dfrac{3}{\Lambda}g_{\mathbb{S}^3}$, $\mathfrak{M}=Q=0$ and $f(x)=x_4$, with Cartesian coordinates $x=(x_1,\,x_2,\,x_3,\,x_4)$. We can see that the equator is a minimal sphere $\Sigma$ of index one with area $|\Sigma|=\dfrac{12\pi}{\Lambda}.$ Therefore, the equality holds in Theorem \ref{cocoro2}.

	\end{remark}

	\section{Proof of the main results}

	In this section, we present the proof of the main results. Before we prove our main results concerning a lower bound for the charged Hawking mass, we will prove a criterion for CMC surfaces in the RNdS space to be stable (Proposition \ref{CMCstable}). This result will be important for a better understanding of Theorem \ref{Teo1}. Then, we will prove the main result of the paper, which is the core of the proof for the sharp lower bounds for the charged Hawking mass of stable surfaces in the electrostatic space-time (Proposition \ref{prop2}).
	
	\begin{proposition}\label{CMCstable}
		Any CMC surface in the three-dimensional Reissner-Nordstrom deSitter space must be a sphere $\mathbb{S}^2(r)$ with mean curvature $H$ given by 
		$$H=\frac{2}{r}\left(1-\frac{2\mathfrak{M}}{r}+\frac{Q^2}{r^2}-\frac{\Lambda r^2}{3}\right)^{1/2}.$$ 
		Moreover, if
		\begin{eqnarray*}
			\frac{3\mathfrak{M}}{2}\left(1 - \sqrt{1-\frac{8Q^2}{9\mathfrak{M}^2}}\right)\leq r\leq  \frac{3\mathfrak{M}}{2}\left(1 + \sqrt{1-\frac{8Q^2}{9\mathfrak{M}^2}}\right)
		\end{eqnarray*}
		such CMC surface must be stable.
	\end{proposition}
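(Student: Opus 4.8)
My plan is to reduce the statement to an explicit computation on the rotationally symmetric slices, the only genuinely non-elementary input being the classification of closed CMC surfaces. I work in the coordinates of \eqref{novacoord}, so that $g_{RNdS}=ds^2+u(s)^2g_{\mathbb{S}^2}$ with $u'(s)=\frac{dr}{ds}=f(r)>0$ on $(r_+,r_c)$. The first step is to invoke a classification of closed embedded constant mean curvature surfaces in rotationally symmetric spaces of Reissner--Nordstrom--deSitter type (in the spirit of Brendle, and of Montiel): since $u'>0$ on the relevant interval and the warping function satisfies the requisite convexity/monotonicity hypotheses, any such $\Sigma$ must be a slice $\{s=s_0\}$, i.e.\ a metric sphere $\mathbb{S}^2(r)$ with $r=u(s_0)$. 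I expect this to be the main obstacle: it is the only deep ingredient, and one has to check that the RNdS ``neck'' between the horizons $r_+$ and $r_c$ meets the hypotheses of the cited uniqueness theorem (and that embeddedness can be arranged or assumed). If one only wants the \emph{criterion} that the slices $\mathbb{S}^2(r)$ are stable — which is how the statement is used below — this classification may be taken as a black box.

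The second step reads off the mean curvature of a slice. With unit normal $\partial_s$, the shape operator of $\{s=s_0\}$ is $\frac{u'(s_0)}{u(s_0)}\,\mathrm{Id}$, so $\Sigma$ is totally umbilic with $H=\frac{2u'(s_0)}{u(s_0)}$; substituting $u'(s_0)=f(r)$ and $u(s_0)=r$ gives exactly $H=\frac{2}{r}\left(1-\frac{2\mathfrak{M}}{r}+\frac{Q^2}{r^2}-\frac{\Lambda r^2}{3}\right)^{1/2}$, as claimed, together with $|A|^2=\frac{1}{2}H^2=\frac{2f(r)^2}{r^2}$.

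The third step is the stability computation. For a warped product $ds^2+u^2g_{\mathbb{S}^2}$ one has $\mathrm{Ric}(\partial_s,\partial_s)=-\frac{2u''}{u}$, and here $u''=\frac{d}{ds}f(r)=f'(r)f(r)$, while differentiating $f^2=1-\frac{2\mathfrak{M}}{r}+\frac{Q^2}{r^2}-\frac{\Lambda r^2}{3}$ in $r$ gives $ff'=\frac{\mathfrak{M}}{r^2}-\frac{Q^2}{r^3}-\frac{\Lambda r}{3}$. Adding $\mathrm{Ric}(\nu,\nu)=-\frac{2ff'}{r}$ to $|A|^2=\frac{2f(r)^2}{r^2}$, the cosmological-constant terms cancel and one is left with
\[
\mathrm{Ric}(\nu,\nu)+|A|^2=\frac{2}{r^2}-\frac{6\mathfrak{M}}{r^3}+\frac{4Q^2}{r^4}=\frac{2}{r^4}\left(r^2-3\mathfrak{M}r+2Q^2\right).
\]
The roots of $r^2-3\mathfrak{M}r+2Q^2$ are precisely $\frac{3\mathfrak{M}}{2}\left(1\pm\sqrt{1-\frac{8Q^2}{9\mathfrak{M}^2}}\right)$, so on the interval in the statement the right-hand side is non-positive. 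Hence the Jacobi operator satisfies $J=-\Delta^\Sigma-(\mathrm{Ric}(\nu,\nu)+|A|^2)\ge-\Delta^\Sigma\ge 0$ on all of $C^\infty(\Sigma)$, so $\mathrm{Index}(\Sigma)=0$ and $\Sigma$ is stable (indeed strongly stable).

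To summarize where the work lies: Steps 2 and 3 are routine warped-product bookkeeping plus the displayed algebraic identity, and the reason the ``photon-sphere interval'' appears is simply that it is exactly the locus where $\mathrm{Ric}(\nu,\nu)+|A|^2\le 0$. One can in fact enlarge it to $r\ge\frac{2Q^2}{3\mathfrak{M}}$ if one only requires volume-preserving stability, using $\lambda_1(\mathbb{S}^2(r))=\frac{2}{r^2}$ against test functions of zero mean, but the stated interval is the clean and natural one. The only substantial step is Step~1, the CMC classification.
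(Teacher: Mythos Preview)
Your argument is correct and follows the same overall architecture as the paper: invoke Brendle's classification to reduce to slices, read off $H=\frac{2}{r}f(r)$ and $|A|^2=\frac{H^2}{2}$, compute $\mathrm{Ric}(\nu,\nu)+|A|^2=\frac{2}{r^4}(r^2-3\mathfrak{M}r+2Q^2)$, and conclude stability from the sign of this quadratic.

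The one genuine difference is in how $\mathrm{Ric}(\nu,\nu)+|A|^2$ is obtained. You use the bare warped-product identity $\mathrm{Ric}(\partial_s,\partial_s)=-\frac{2u''}{u}$ and differentiate $f^2$ directly. The paper instead passes through the electrostatic system: from the Laplacian decomposition and Definition~\ref{def1} it derives the identity
\[
H\langle\nabla f,\nu\rangle-f|A|^2=2f\bigl(|E|^2-\langle E,\nu\rangle^2\bigr)-\Delta^\Sigma f-f\bigl(\mathrm{Ric}(\nu,\nu)+|A|^2\bigr),
\]
which on a slice (where $E\parallel\nu$ and $f$ is constant) collapses to $\frac{H}{f}\langle\nabla f,\nu\rangle-|A|^2=-(\mathrm{Ric}(\nu,\nu)+|A|^2)$, and then computes the left-hand side. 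Your route is more elementary and self-contained; the paper's route is less direct here but produces the general identity \eqref{vvv}, which is exactly what gets reused in the proof of Proposition~\ref{prop2}. Your side remark on enlarging the interval to $r\ge\frac{2Q^2}{3\mathfrak{M}}$ for volume-preserving stability (via $\lambda_1(\mathbb{S}^2(r))=\frac{2}{r^2}$) is also correct and goes slightly beyond what the paper states.
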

	\begin{proof}[{\bf Proof of Proposition \ref{CMCstable}.}]
		We start with a result due to Brendle proving that any CMC surface must be $\mathbb{S}^2(r)$ in the Reissner-Nordstrom deSitter space (cf. \cite[Section 5]{brendle2012} to see that $\Lambda$ plays no role in the proof of Corollary 1.3).  For every spherical slice, $\Sigma=\{r\}\times\mathbb{S}^2$ in the RNdS space, we have $H(r)=\dfrac{2}{r}f(r)$ and $|\Sigma|=4\pi r^2$. By the spherical symmetry, $\Sigma$ is umbilical. Moreover, $E=\dfrac{Q}{r^2}f(r)\partial r$ is parallel to normal vector field $\nu=f(r)\partial_r.$ 
		
		Remember that
		\begin{eqnarray*}
			\Delta f = \Delta^\Sigma f + \nabla^2 f(\nu,\,\nu) + H\langle \nabla f,\,\nu\rangle.
		\end{eqnarray*}
		By Definition \ref{def1} we have 
		\begin{eqnarray}\label{vvv}
			H\langle \nabla f,\,\nu\rangle - f|A|^2 &=& \Delta f -\Delta^\Sigma f - \nabla^2 f(\nu,\,\nu) - f|A|^2 \nonumber\\
			&=& (|E|^2-\Lambda)f - \Delta^\Sigma f - f[Ric(\nu,\,\nu) + 2\langle E,\,\nu\rangle^2 - (|E|^2 + \Lambda)]- f|A|^2\nonumber\\
			&=& 2f(|E|^2 - \langle E,\,\nu\rangle^2) - \Delta^\Sigma f - fRic(\nu,\,\nu) - f|A|^2 .
		\end{eqnarray}
		Since $E$ is parallel to $\nu$ (unitary), i.e., $|E|^2 =\langle E,\,\nu\rangle^2$, we obtain 
		\begin{eqnarray*}
			\left(\frac{H}{f}\langle \nabla f,\,\nu\rangle - |A|^2\right)f
			= (- \Delta^\Sigma  - Ric(\nu,\,\nu) - |A|^2)f .
		\end{eqnarray*}
		i.e.,
		\begin{eqnarray}\label{danadana}
			\frac{H}{f}\langle \nabla f,\,\nu\rangle - |A|^2 
			=  - Ric(\nu,\,\nu) - |A|^2,
		\end{eqnarray}
		where we use that $\Delta^{\Sigma}f=0$, since $f$ is a non-null constant at $\Sigma=\{r\}\times\mathbb{S}^2$.

		In the Reissner-Nordstrom deSitter space, we have
		\begin{eqnarray*}
			|E|^2 = \frac{Q^2}{r^4}
		\end{eqnarray*}
		and 
		\begin{eqnarray*}
			\nabla f = f(r)^2f'(r)\partial_r= f(r)^2\frac{1}{2f(r)}\left(\frac{2\mathfrak{M}}{r^2}-\frac{2Q^2}{r^3}-\frac{2\Lambda r}{3}\right)\partial_r.
		\end{eqnarray*}
		Thus, 
		\begin{eqnarray*}
			\frac{H}{f}\langle \nabla f,\,\nu\rangle = \frac{2}{r}\left(\frac{\mathfrak{M}}{r^2}-\frac{Q^2}{r^3}-\frac{\Lambda r}{3}\right).
		\end{eqnarray*}

		Moreover, 
		\begin{eqnarray}\label{cmcstable}
			\frac{H}{f}\langle \nabla f,\,\nu\rangle -|A|^2&=& \frac{2}{r}\left(\frac{\mathfrak{M}}{r^2}-\frac{Q^2}{r^3}-\frac{\Lambda r}{3}\right) - \frac{H^2}{2}= \frac{2}{r}\left(\frac{\mathfrak{M}}{r^2}-\frac{Q^2}{r^3}-\frac{\Lambda r}{3}\right) - \frac{1}{2}\frac{4}{r^2}f(r)^2\nonumber\\
			&=&\frac{2}{r}\left[\frac{\mathfrak{M}}{r^2}-\frac{Q^2}{r^3}-\frac{\Lambda r}{3} - \frac{1}{r}\left(1-\frac{2\mathfrak{M}}{r}+\frac{Q^2}{r^2}-\frac{\Lambda r^2}{3}\right)\right]\nonumber\\
			&=& \frac{2}{r}\left[\frac{3\mathfrak{M}}{r^2}-\frac{2Q^2}{r^3} - \frac{1}{r}\right] = - \frac{2}{r^4}\left[2Q^2  -3\mathfrak{M}r + r^2\right].
		\end{eqnarray}
		Note that if $\mathfrak{M}=Q=0$ we always have an unstable sphere (see the deSitter space). 
		We can conclude that if 
		\begin{eqnarray}\label{raiostable}
			\frac{3\mathfrak{M}}{2}\left(1 - \sqrt{1-\frac{8Q^2}{9\mathfrak{M}^2}}\right)\leq r\leq  \frac{3\mathfrak{M}}{2}\left(1 + \sqrt{1-\frac{8Q^2}{9\mathfrak{M}^2}}\right)
		\end{eqnarray}
		the CMC surface must be stable. 
		Moreover, we can see that the photon sphere of radius $\frac{3\mathfrak{M}}{2}\left(1 + \sqrt{1-\frac{8Q^2}{9\mathfrak{M}^2}}\right)$ must be a stable CMC surface in the RNdS space. In fact, from \eqref{raiostable} for any $\phi\in C^{\infty}(\Sigma)$ such that $\int_{\Sigma}\phi d\sigma = 0$ we have
		\begin{eqnarray*}
			\int_{\Sigma}|\nabla_{\Sigma}\phi|^2 d\sigma - \int_{\Sigma}( Ric(\nu,\,\nu) + |A|^2)\phi^2 d\sigma
			&=& \int_{\Sigma}|\nabla_{\Sigma}\phi|^2 d\sigma \\
			&+& \int_{\Sigma}\left(\frac{H}{f}\langle \nabla f,\,\nu\rangle - |A|^2 \right)\phi^2 d\sigma\geq0.
		\end{eqnarray*}

	\end{proof}
	
	\begin{proposition}\label{prop2}
		Let $(M^3,\,g,\,f,\,E)$ be a three-dimensional electrostatic system (compact or non-compact) and $\Sigma$ a closed surface on $M$ satisfying $$\int_{\Sigma}(Ric(\nu,\,\nu)+|A|^2)d\sigma\leq \mathcal{C}.$$ Then,
		\begin{eqnarray*}
			\mathfrak{M}_{CH}(\Sigma)  \geq \left(\frac{|\Sigma|}{4\pi}\right)^{1/2}\left[\frac{1}{16\pi}\int_{\Sigma}H^2d\sigma +\frac{4\pi}{|\Sigma|}Q(\Sigma)^2+\frac{\Lambda}{3}\left(\frac{|\Sigma|}{4\pi}\right) - \frac{\mathcal{C}}{8\pi}\right],
		\end{eqnarray*}
		where $\mathcal{C}\in\mathbb{R}.$ Moreover, the genus of $\Sigma$ is bounded from above, i.e.,
$$1+\dfrac{\mathcal{C}}{4\pi}\geq g(\Sigma).$$
	\end{proposition}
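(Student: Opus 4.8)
The plan is to reduce the asserted estimate to a single integral inequality on $\Sigma$ and then to read it off from the Gauss equation together with the scalar-curvature identity \eqref{rrr}. Since $\sqrt{|\Sigma|/16\pi}=\tfrac12(|\Sigma|/4\pi)^{1/2}$, unwinding the definition of $\mathfrak{M}_{CH}(\Sigma)$ and using $\mathfrak{X}(\Sigma)=2(1-g(\Sigma))$, a direct manipulation (cancel the common positive factor $(|\Sigma|/4\pi)^{1/2}$ and multiply by $4\pi$) shows that the claimed bound is equivalent to
\begin{equation*}
	2\pi\mathfrak{X}(\Sigma)+\mathcal{C}\ \geq\ \frac{3}{4}\int_{\Sigma}H^2\,d\sigma+\frac{16\pi^2}{|\Sigma|}Q(\Sigma)^2+\Lambda|\Sigma|.
\end{equation*}
Thus the whole task is to establish this inequality, and the genus bound will come out of the same computation.

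The first step is the Gauss equation for $\Sigma^2\subset M^3$, namely $K_\Sigma=\tfrac{R}{2}-\mathrm{Ric}(\nu,\nu)+\tfrac12(H^2-|A|^2)$, where $K_\Sigma$ is the Gauss curvature of $\Sigma$ and $\tfrac12(H^2-|A|^2)=\det A$. Integrating over $\Sigma$, applying Gauss--Bonnet $\int_\Sigma K_\Sigma\,d\sigma=2\pi\mathfrak{X}(\Sigma)$, substituting $R=2(|E|^2+\Lambda)$ from \eqref{rrr}, and regrouping via $-\int_\Sigma\mathrm{Ric}(\nu,\nu)\,d\sigma-\tfrac12\int_\Sigma|A|^2\,d\sigma=-\int_\Sigma(\mathrm{Ric}(\nu,\nu)+|A|^2)\,d\sigma+\tfrac12\int_\Sigma|A|^2\,d\sigma$ produces the identity
\begin{equation*}
	2\pi\mathfrak{X}(\Sigma)+\int_{\Sigma}(\mathrm{Ric}(\nu,\nu)+|A|^2)\,d\sigma=\int_{\Sigma}(|E|^2+\Lambda)\,d\sigma+\frac12\int_{\Sigma}H^2\,d\sigma+\frac12\int_{\Sigma}|A|^2\,d\sigma.
\end{equation*}
Into this I would feed three elementary estimates, in order: (i) the hypothesis $\int_\Sigma(\mathrm{Ric}(\nu,\nu)+|A|^2)\,d\sigma\le\mathcal{C}$; (ii) the pointwise inequality $|A|^2\ge\tfrac12H^2$ (equality exactly where $\Sigma$ is umbilic), which turns $\tfrac12\int_\Sigma H^2\,d\sigma+\tfrac12\int_\Sigma|A|^2\,d\sigma$ into at least $\tfrac34\int_\Sigma H^2\,d\sigma$; and (iii) Cauchy--Schwarz applied to $4\pi Q(\Sigma)=\int_\Sigma\langle E,\nu\rangle\,d\sigma$, giving $16\pi^2Q(\Sigma)^2\le|\Sigma|\int_\Sigma\langle E,\nu\rangle^2\,d\sigma\le|\Sigma|\int_\Sigma|E|^2\,d\sigma$, i.e. $\int_\Sigma|E|^2\,d\sigma\ge 16\pi^2Q(\Sigma)^2/|\Sigma|$ (equality iff $E$ is a constant multiple of $\nu$). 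Together with $\int_\Sigma\Lambda\,d\sigma=\Lambda|\Sigma|$ these give precisely the displayed inequality, and reversing the algebra of the first paragraph restores $\mathfrak{M}_{CH}(\Sigma)\ge(|\Sigma|/4\pi)^{1/2}[\,\cdots\,]$. The three saturation conditions — $\Sigma$ totally umbilic, $E\perp\Sigma$, equality in the curvature hypothesis — also supply the equality cases invoked afterwards in Theorems~\ref{Teo1}, \ref{cocoro1} and \ref{cocoro2}, where $\mathcal{C}$ is specialized to $0$, $8\pi$ and $8\pi(1+Int[(1+g(\Sigma))/2])$ coming from \eqref{1}, \eqref{22} and \eqref{223}.

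For the genus estimate I would return to the last identity, discard the non-negative terms $\tfrac12\int_\Sigma H^2\,d\sigma$ and $\tfrac12\int_\Sigma|A|^2\,d\sigma$, and use the hypothesis to obtain $2\pi\mathfrak{X}(\Sigma)+\mathcal{C}\ge\int_\Sigma(|E|^2+\Lambda)\,d\sigma=\tfrac12\int_\Sigma R\,d\sigma\ge 0$; since $2\pi\mathfrak{X}(\Sigma)=4\pi(1-g(\Sigma))$ this rearranges to $g(\Sigma)\le 1+\mathcal{C}/(4\pi)$. There is no serious obstacle in any of this — which is exactly why the statement is phrased as an auxiliary proposition driving the main theorems; the only points demanding care are the numerical bookkeeping when passing between $\mathfrak{M}_{CH}(\Sigma)$ and the integral inequality, and, for the genus bound, the non-negativity of the total scalar curvature $\int_\Sigma R\,d\sigma$ (immediate when $\Lambda\ge0$, as in the RNdS, Nariai and de~Sitter models). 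I would therefore keep $\int_\Sigma|E|^2\,d\sigma$ and $\Lambda|\Sigma|$ explicit throughout so that this last point stays transparent.
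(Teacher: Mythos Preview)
Your argument is correct and in fact cleaner than the paper's. Both proofs land on the same intermediate inequality
\[
2\pi\mathfrak{X}(\Sigma)+\mathcal{C}\ \geq\ \frac12\int_\Sigma(H^2+|A|^2)\,d\sigma+\int_\Sigma(|E|^2+\Lambda)\,d\sigma,
\]
but they arrive there by different routes. You reach it in one step by integrating the Gauss equation, substituting $R=2(|E|^2+\Lambda)$, and regrouping; the paper instead invokes the electrostatic Hessian identity $\nabla^2f(\nu,\nu)=f[\mathrm{Ric}(\nu,\nu)+2\langle E,\nu\rangle^2-(|E|^2+\Lambda)]$ together with the Laplacian decomposition $\Delta f=\Delta^\Sigma f+\nabla^2f(\nu,\nu)+H\langle\nabla f,\nu\rangle$, producing an integral identity \eqref{show} with terms $\int_\Sigma\tfrac{H}{f}\langle\nabla f,\nu\rangle\,d\sigma+\int_\Sigma\tfrac{|\nabla_\Sigma f|^2}{f^2}\,d\sigma$, and a second inequality \eqref{como1} (from the identity \eqref{vvv}) in which the same $f$-terms appear; when the two are combined these terms cancel identically. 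Your approach therefore makes transparent that the lapse $f$ and the full electrostatic system play no role in Proposition~\ref{prop2}: only the scalar-curvature constraint \eqref{rrr} is used. The paper's route has the minor advantage of exhibiting the connection with the Jacobi-type identity \eqref{vvv} used in Proposition~\ref{CMCstable}, but as a proof of Proposition~\ref{prop2} yours is the more economical derivation. You are also right to flag that the genus bound tacitly uses $\int_\Sigma(|E|^2+\Lambda)\,d\sigma\ge0$; the paper does the same (and makes the hypothesis $\Lambda\ge0$ explicit only later, in the proof of Theorem~\ref{cocoro1}).
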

	\begin{proof}[{\bf Proof of Proposition \ref{prop2}.}]
		From Definition \ref{def1} and the Gauss equation we have, respectively,
		\begin{eqnarray*}
			\nabla^2 f(\nu,\,\nu) = f[Ric(\nu,\,\nu) + 2\langle E,\,\nu\rangle^2 - (|E|^2 + \Lambda)]
		\end{eqnarray*}
		and 
		\begin{eqnarray*}
			\frac{R}{2} = K + Ric(\nu,\,\nu) + \frac{1}{2}(|A|^2 - H^2),
		\end{eqnarray*}
		where $K$ is Gauss curvature of $\Sigma.$
		Combining the above equations and using \eqref{rrr}, we obtain 
		\begin{eqnarray*}
			\nabla^2 f(\nu,\,\nu) = f[ \frac{1}{2}(H^2 - |A|^2) - K + 2\langle E,\,\nu\rangle^2].
		\end{eqnarray*}

		Now, from 
		\begin{eqnarray}\label{decopLapla}
			\Delta f = \Delta^\Sigma f + \nabla^2 f(\nu,\,\nu) + H\langle \nabla f,\,\nu\rangle
		\end{eqnarray}
		and the last equation we deduce 
		\begin{eqnarray*}
			\frac{\Delta f}{f} = \frac{\Delta^\Sigma f}{f} + [ \frac{1}{2}(H^2 - |A|^2) - K + 2\langle E,\,\nu\rangle^2] + \frac{H}{f}\langle \nabla f,\,\nu\rangle.
		\end{eqnarray*}
		By integrating the above identity, we have
		\begin{eqnarray}\label{show}
			\int_{\Sigma}(|E|^2 - \Lambda)d\sigma &=&\int_{\Sigma}\frac{H}{f}\langle \nabla f,\,\nu\rangle d\sigma + \int_{\Sigma}\frac{|\nabla_{\Sigma}f|^2}{f^2}d\sigma \nonumber\\
   &+& \int_{\Sigma}\left[ \frac{1}{2}(H^2 - |A|^2) - K + 2\langle E,\,\nu\rangle^2\right]d\sigma.
		\end{eqnarray}

		We consider that
		\begin{eqnarray}\label{stata}
			\int_{\Sigma}( Ric(\nu,\,\nu) + |A|^2)d\sigma \leq \mathcal{C},
		\end{eqnarray}
  where $\mathcal{C}\in\mathbb{R}.$
		Dividing \eqref{vvv} by $f$ and by using \eqref{stata} we get
		\begin{eqnarray}\label{como1}
			\int_{\Sigma}\left(\frac{H}{f}\langle \nabla f,\,\nu\rangle +\frac{1}{f^2}|\nabla_\Sigma f|^2\right)d\sigma
			&\geq&   \int_{\Sigma}|A|^2d\sigma \nonumber\\
			&+& 2\int_{\Sigma}(|E|^2 - \langle E,\,\nu\rangle^2)d\sigma - \mathcal{C}.
		\end{eqnarray}

		Thus, from \eqref{show}, \eqref{como1} and the Gauss-Bonnet theorem we have
		\begin{eqnarray*}
			4\pi(1-g(\Sigma))&\geq& \int_{\Sigma}[ \frac{1}{2}(H^2 + |A|^2)]d\sigma + \int_{\Sigma}(|E|^2 + \Lambda)d\sigma - \mathcal{C},
		\end{eqnarray*} 
		and since $2|A|^2\geq H^2$, 
		\begin{eqnarray}\label{ggggenus}
			4\pi(1-g(\Sigma)) &\geq& \frac{3}{4}\int_{\Sigma}H^2d\sigma + \int_{\Sigma}(|E|^2 + \Lambda)d\sigma - \mathcal{C}.
		\end{eqnarray}
		With this inequality, we can analyze the topology of $\Sigma$. Indeed, $4\pi(1-g(\Sigma))+\mathcal{C}\geq0$, i.e., $$1+\dfrac{\mathcal{C}}{4\pi}\geq g(\Sigma).$$

		Combine the above inequality \eqref{ggggenus} with Definition \ref{hawkingmass}, i.e.,
		\begin{eqnarray*}
			2\pi\mathfrak{X}(\Sigma)  = 4\pi\sqrt{\frac{16\pi}{|\Sigma|}}\mathfrak{M}_{CH}(\Sigma)   + \frac{1}{4}\int_{\Sigma}(H^2 + \frac{4}{3}\Lambda)d\sigma - \frac{16\pi^2}{|\Sigma|}Q(\Sigma)^2  ,
		\end{eqnarray*}
		to obtain
		\begin{eqnarray*}
			4\pi\sqrt{\frac{16\pi}{|\Sigma|}}\mathfrak{M}_{CH}(\Sigma)     \geq  \frac{16\pi^2}{|\Sigma|}Q(\Sigma)^2 + \frac{1}{2}\int_{\Sigma}H^2d\sigma + \int_{\Sigma}|E|^2 d\sigma + \frac{2}{3}\Lambda|\Sigma| - \mathcal{C}.
		\end{eqnarray*}
		Moreover, applying H\"older's inequality to the definition of charge given by \eqref{chaaaarge} yields to
		\begin{eqnarray*}
		16\pi^2Q(\Sigma)^2=\left(\int_{\Sigma}\langle E,\,\nu\rangle d\sigma\right )^2  \leq|\Sigma|\int_{\Sigma}\langle E,\,\nu\rangle^2d\sigma\leq|\Sigma|\int_{\Sigma}|E|^2d\sigma.
		\end{eqnarray*}

		Finally, 
		\begin{eqnarray*}
			4\pi\sqrt{\frac{16\pi}{|\Sigma|}}\mathfrak{M}_{CH}(\Sigma)     \geq  \frac{32\pi^2}{|\Sigma|}Q(\Sigma)^2 + \frac{1}{2}\int_{\Sigma}H^2d\sigma + \frac{2}{3}\Lambda|\Sigma| - \mathcal{C}.
		\end{eqnarray*}
		The inequality above proves the theorem.

	\end{proof}

	\begin{proof}[{\bf Proof of Theorem \ref{Teo1}.}]
		By hypothesis, we have
		\begin{eqnarray}\label{indexxxxxx0}
			0\geq   \int_{\Sigma}( Ric(\nu,\,\nu) + |A|^2)d\sigma.
		\end{eqnarray}
		Therefore, Proposition \ref{prop2} holds for $\mathcal{C}=0$, and from \eqref{ggggenus} we have $g(\Sigma)\leq1.$ 
		
		Let us apply \eqref{ine1} to a sphere in the Reissner-Nordstrom deSitter space. We know that in this case $\mathfrak{M}_{CH}(\mathbb{S}(r)) =\mathfrak{M}$, see \cite{baltazar2023}. Since
		\begin{eqnarray*}
			\frac{1}{16\pi}\int_{\mathbb{S}(r)}H^2d\sigma = f(r)^2
		\end{eqnarray*}
		our inequality \eqref{ine1} is reduced to
		\begin{eqnarray*}
			\mathfrak{M}\geq r -  2\mathfrak{M} +2\frac{Q^2}{r},
		\end{eqnarray*}
		where the equality holds for $$r= \frac{3\mathfrak{M}}{2}\left(1 \pm \sqrt{1-\frac{8Q^2}{9\mathfrak{M}^2}}\right).$$
		This is a totally umbilical stable constant mean curvature surface in the RNdS space in which equality holds in \eqref{indexxxxxx0}, and $E$ is parallel to $\nu$. See Proposition \ref{CMCstable}, equations \eqref{danadana} and \eqref{cmcstable}.

	\end{proof}

	\begin{proof}[{\bf Proof of Theorem \ref{sharptheo}.}]
		
		Consider a stable minimal sphere of radius $r$ in the RNdS space and applying the inequality presented by Theorem \ref{Teo1}, we get        
		\begin{eqnarray*}
			\mathfrak{M}  \geq \frac{Q^2}{r} + \frac{\Lambda}{3}r^3,
		\end{eqnarray*}
		where equality holds if and only if
		\begin{eqnarray}\label{poly}
			\frac{\Lambda}{3}r^4 -\mathfrak{M}r + Q^2 = 0.
		\end{eqnarray}
		The mean curvature of a two-sphere in the RNdS space is given by $H(r)=\dfrac{2}{r}f(r)$. Moreover, the ADM mass is 
		\begin{eqnarray}\label{masssss}
			\mathfrak{M} = \frac{1}{2}\left(r + \frac{Q^2}{r} - \frac{\Lambda}{3}r^3 \right).
		\end{eqnarray}
		Hence, combining \eqref{poly} and \eqref{masssss} we get
		\begin{eqnarray*}\label{aiai}
			\Lambda r^4 - r^2 + Q^2 = 0, 
		\end{eqnarray*}
		i.e.,
		\begin{eqnarray*}
			r^2 = \frac{1\pm\sqrt{1-4\Lambda Q^2}}{2\Lambda}.
		\end{eqnarray*}

		We can consult \cite[Section 3.8]{tiarlos} to conclude that a sphere in the RNdS space is stable when the radius $r$ is bounded, i.e.,
		\begin{eqnarray}\label{tobestable}
			\frac{1-\sqrt{1-4\Lambda Q^2}}{2\Lambda} \leq r^2 \leq \frac{1+\sqrt{1-4\Lambda Q^2}}{2\Lambda},
		\end{eqnarray}
		and it is strictly stable if
		\begin{eqnarray*}
			\frac{1-\sqrt{1-4\Lambda Q^2}}{2\Lambda} < r^2 < \frac{1+\sqrt{1-4\Lambda Q^2}}{2\Lambda}.
		\end{eqnarray*}
		In fact, the Jacobi operator for the RNdS space is given by 
		$$J= - \Delta^{\Sigma} - \frac{1}{r^4}(\Lambda r^4 -r^2 +Q^2).$$

		On the other hand, consider the equality holds in  \eqref{desiminimal} for a stable minimal sphere of radius $r$ in the electrostatic system. Thus,
		\begin{eqnarray*}
			\mathfrak{M}_{CH}(\Sigma)  = \left(\frac{|\Sigma|}{4\pi}\right)^{1/2}\left[\frac{4\pi}{|\Sigma|}Q(\Sigma)^2+\frac{\Lambda}{3}\left(\frac{|\Sigma|}{4\pi}\right)\right] = \frac{Q^2}{r} + \frac{\Lambda}{3}r^3.
		\end{eqnarray*}
		We must have a priori 
		\begin{eqnarray*}
			r^2 = \frac{1\pm\sqrt{1-4\Lambda Q^2}}{2\Lambda},\quad\mbox{i.e.,}\quad Q^2 = \frac{1}{4\Lambda}[1 - (1 - 2\Lambda r^2)^2]
		\end{eqnarray*}
		Otherwise, the equation \eqref{poly} does not hold.

		Hence, 
		\begin{eqnarray*}
			\mathfrak{M}_{CH}(\Sigma) = \frac{Q^2}{r} + \frac{\Lambda}{3}r^3 =  \frac{1}{4\Lambda r}[1 - (1 - 2\Lambda r^2)^2] + \frac{\Lambda}{3}r^3 = r - \frac{2}{3}\Lambda r^3.
		\end{eqnarray*}
		Therefore, the sphere $\Sigma=\mathbb{S}(r)$ of radius $r^2=\dfrac{1}{2\Lambda}$ maximizes the charged Hawking mass, which is strictly stable. Hence, from Theorem 2 in \cite{baltazar2023}, there is a neighborhood of such sphere in $(M,\,g)$ that is isometric to the RNdS space $((-\varepsilon,\,\varepsilon)\times\Sigma,\,g_{RNdS})$ for some $\varepsilon>0$. Since in the RNdS, $\Sigma$ is minimal if and only if $f=0$, such surface must be the horizon boundary of $M$, i.e., $\Sigma  = \partial M$. 
		
		Considering that the roots of $f$ in the RNdS space are $r_+=r_-=r_c,$ we can perform a change of variable such that $g_{RNdS} = ds^2 + \rho^2 g_{\mathbb{S}^2}$ with $f(s)=s$, $\rho^2=\dfrac{1}{2\Lambda}$, $Q^2=\dfrac{1}{4\Lambda}$ and $\mathfrak{M}^2 = \dfrac{2}{9\Lambda}$, which correspond to the ultracold black hole system, where $M=[0,\,+\infty)\times\mathbb{S}^2$ (cf. \cite[Equation 1.12]{baltazar2023} and \cite[Section 3.7]{tiarlos}), with a stable horizon at $s=0$. Here, we are assuming the change of variable given by \eqref{novacoord}.

		From \cite[Remark 4.4]{maximo2013}, the rigidity for area-minimizing two-sphere holds if the area of $\Sigma$ is fixed. In this case, $\Sigma$ is a global maximum of the charged Hawking mass. We will assume that Theorem 2 in \cite{baltazar2023} holds for a stable minimal sphere. In fact, for $\Lambda>0$, to bypass \cite[Proposition 4]{baltazar2023} we need to impose that
		\begin{eqnarray*}(\underbrace{\lambda_1(J)}_{=0}+\Lambda)|\Sigma| + \frac{16\pi^2Q(\Sigma)^2}{|\Sigma|}= 4\pi,
		\end{eqnarray*}
		where $\lambda_1(J)$ stands for the first eigenvalue of the Jacobi operator. To get this equality in our settings in the ultracold black hole system we must take $|\Sigma|=4\pi\rho^2 = \frac{2\pi}{\Lambda}$ and $Q^2=\frac{1}{4\Lambda}$. 
  	\end{proof}

	\begin{proof}[{\bf Proof of Theorem \ref{cocoro1}.}]
		A stable CMC surface satisfies 
		\begin{eqnarray*}
			8\pi\geq \int_{\Sigma}\left( |A|^2 + Ric(\nu, \nu)\right)\, d\sigma.
		\end{eqnarray*}
		See more details about the above inequality in \cite{christodoulou1986}. So, applying Proposition \ref{prop2} and considering $\mathcal{C}=8\pi$ we obtain inequality \eqref{ine3}.
		Assuming $\Lambda\geq0$ from \eqref{ggggenus} we get $g(\Sigma)\leq3$.

		Let us apply \eqref{ine3} to a sphere in the RNdS. We know that in this case $\mathfrak{M}_{CH}(\mathbb{S}(r)) =\mathfrak{M}$, see \cite{baltazar2023}. Moreover, from \eqref{cmcstable} if $Q=0$, $\mathfrak{M}=\dfrac{1}{3\sqrt{\Lambda}}$ and $r=\dfrac{1}{\sqrt{\Lambda}}$, then such sphere must be stable. Furthermore, 
		\begin{eqnarray*}
			\frac{1}{16\pi}\int_{\mathbb{S}(r)}H^2d\sigma = f(r)^2.
		\end{eqnarray*}
		So, 
		\begin{eqnarray}\label{fim}
			\mathfrak{M} \geq \left(\frac{|\Sigma|}{4\pi}\right)^{1/2}\left[f^2 + \frac{4\pi}{|\Sigma|}Q^2  + \frac{\Lambda}{3}\frac{|\Sigma|}{4\pi}  - 1\right].
		\end{eqnarray}
		Considering the coordinate transformation \eqref{novacoord}, see \cite[Section 3.4]{tiarlos}, the Nariai system is $M^3=[0,\,\pi/\sqrt{\Lambda}]\times\mathbb{S}^2$ with metric tensor $g=ds^2 + \dfrac{1}{\Lambda}g_{\mathbb{S}^2}$, where $f(s)=\sin(\sqrt{\Lambda}s)$, $\mathfrak{M}=\dfrac{1}{3\sqrt{\Lambda}}$ and $Q=0$. The equality in \eqref{fim} holds for $s=\dfrac{\pi}{2\sqrt{\Lambda}}.$
  \end{proof}

	\begin{proof}[{\bf Proof of Theorem \ref{cocoro2}.}]
		It is well-known that a closed minimal surface of index one satisfies the inequality:
		\begin{eqnarray*}
			8\pi\left(1+ Int\left[\frac{1+g(\Sigma)}{2}\right]\right)\geq \int_{\Sigma}\left( |A|^2 + Ric(\nu, \nu)\right)\, d\sigma,
		\end{eqnarray*}
		where $Int[x]$ denotes the integer part of $x.$ See for instance \cite[Proposition 17]{tiarlos} and \cite{ritore92}. Considering $\mathcal{C}=8\pi\left(1+ Int\left[\dfrac{1+g(\Sigma)}{2}\right]\right)$ in Proposition \ref{prop2} will give us the desired result.
		
		Consider the deSitter system which is $M^3=\mathbb{S}^3_{+}$ with metric $g=\dfrac{3}{\Lambda}g_{\mathbb{S}^3}$, $\mathfrak{M}=Q=0$ and $f(x)=x_4$, where $x=(x_1,\,x_2,\,x_3,\,x_4)$. We can see that the equator is a minimal sphere $\Sigma$ of index one with area $|\Sigma|=\dfrac{12\pi}{\Lambda}.$ Therefore, the equality holds in \eqref{ine4}.
		\end{proof}

	
	\

	\noindent{\bf Conflict of interest:} The authors declare no conflict of interest.
	
	\

	\noindent{\bf Data Availability:} Not applicable.
	
	\
	
	\noindent{{\bf Acknowledgments.}} The authors wish to express their gratitude to Professor Ivaldo Nunes for his valuable comments and discussions. The authors thank the anonymous referee for thoroughly reviewing the manuscript and for the valuable suggestions.


\begin{thebibliography}{1}
		
		
		
		
		
		
		
		\bibitem{alaee2023} Aghil Alaee; Marcus Khuri; Shing-Tung Yau - \emph{A quasi-local mass}. arXiv:2309.02770v1 [math.DG] 6 Sep 2023
		
		\bibitem{baltazar2023} Halyson Baltazar; Abdenago Barros; Rondinelle Batista - \emph{A local rigidity theorem for minimal two-spheres in charged time-symmetric initial data set.} Lett. Math. Phys. 113 (2023), no. 5, Paper No. 92, 25 pp. MR4635061.
		
		
		\bibitem{brendle2012}{Simon Brendle} - {\em Constant mean curvature surfaces in warped product manifolds.} Publ. Math. Inst. Hautes Études Sci. 117, (2013): 247-269. MR3090261


		
		\bibitem{cederbaum2016uniqueness} Carla Cederbaum; Gregory Galloway - \emph{Uniqueness of photon spheres in electro-vacuum spacetimes}, Classical and Quantum Gravity 33 (2016), no. 7, 075006, MR3471730.
		
		
		
		
		\bibitem{christodoulou1986} Demetrios Christodoulou; Shing-Tung Yau - \emph{Some remarks on the quasi-local mass}, in: Mathematics and General Relativity (Santa Cruz, CA, 1986), Vol. 71, pp. 9–14, Amer. Math. Soc., Providence, (1988).
		
		\bibitem{chrusciel2017non} Piotr T. Chrusciel; Erwann Delay - \emph{Non-singular space-times with a negative cosmological constant: II. Static solutions of the Einstein–Maxwell equations}, Lett. Math. Phys. 107 (2017), no. 8, 1391-1407. MR3669238.
		
		\bibitem{tiarlos} Tiarlos Cruz; Vanderson Lima; Alexandre de Sousa - \emph{Min-max minimal surfaces, horizons and electrostatic systems}, to appear in Jour. Diff. Geom., (arXiv:1912.08600), 1–47.
		
		
		
		
		\bibitem{huisken2001} {Gerhard Huisken; Tom Ilmanen} - \emph{The inverse mean curvature flow and the Riemannian Penrose inequality.} J. Differential Geom. 59 (2001), no. 3, 353-437. MR1916951.
		
		
		\bibitem{sophia}{Sophia Jahns} - \emph{Photon sphere uniqueness in higher-dimensional electrovacuum spacetimes.} Class. Quantum Grav. 36(23), (2019): 235019.
		
		\bibitem{li82} Peter Li; Shing-Tung Yau - \emph{A new conformal invariant and its applications to the Willmore conjecture and the first eigenvalue of compact surfaces.} Invent. Math. 69 (1982), 269-291. MR0674407.
		
		\bibitem{maximo2013} Davi Maximo; Ivaldo Nunes - \emph{Hawking mass and local rigidity of minimal two-spheres in three-manifolds.} Comm. Anal. Geom. 21 (2013), no. 2, 409-432. MR3043752.
		
		\bibitem{miao2005} {Pengzi Miao} - \emph{A remark on boundary effects in static vacuum initial data sets.} Classical Quantum Gravity 22 (2005), no. 11, L53-L59. MR2145225. 
		
		\bibitem{miao2020} Pengzi Miao; Yaohua Wang; Naqing Xie -  \emph{On Hawking mass and Bartnik mass of CMC surfaces.} Math. Res. Lett. 27 (2020), no. 3, 855-885. MR4216572.
		
		\bibitem{mondino2022} Andrea Mondino; Aidan Templeton-Browne - \emph{Some rigidity results for the Hawking mass and a lower bound for the Bartnik capacity.} J. Lond. Math. Soc. (2)106 (2022), no.3, 1844-1896. MR4498543.


  \bibitem{penrose} Roger Penrose - \emph{Some unsolved problems in classical general relativity.} Annals of Mathematics Studies, 102 (1982), 631-668.
		
		\bibitem{ritore92} Manuel Ritor\'e; Antonio Ros - \emph{Stable constant mean curvature tori and the isoperimetric problem in three space forms.} Comment. Math. Helvet. 67 (1992), 293-305. MR1161286.


  \bibitem{lin2016} {Chen-Yun Lin; Christina Sormani} - {\em Bartnik's mass and Hamilton's modified Ricci flow.} Ann. Henri Poincar\'e 17 (2016), no. 10, 2783–2800.
		
		
		\bibitem{schoen79} Richard Schoen; Shing-Tung Yau - \emph{On the Proof of the Positive Mass Conjecture in General
			Relativity}. Comm. Math. Phys. 65 (1979) 45-76.
		
		
		
		
		
		
		
		
		
		
		
		
		
		
	\end{thebibliography}
\end{document}